\documentclass[a4paper,12pt]{article}
\usepackage[utf8]{inputenc}

\usepackage{amsmath}
\usepackage{amssymb}
\usepackage{amsthm}
\usepackage{mathtools}

\usepackage{cite}

\usepackage{booktabs}

\usepackage{esint}

\usepackage{mathrsfs}
\usepackage{bbm}

\usepackage{accents}

\usepackage{xcolor}
\usepackage{graphicx}        
\usepackage{hyperref}

\title{Well-posedness and time stepping adaptivity for a class of collocation discretisations of time-fractional subdiffusion equations\footnote{This paper has been accepted for publication in Fract. Cal. Appl. Anal. (2026)}}
\author{Sebastian Franz\footnote{
          Institute of Scientific Computing, Technische Universit\"at Dresden, Germany.
          \mbox{e-mail}: sebastian.franz@tu-dresden.de},\and
        Natalia Kopteva\footnote{
          Department of Mathematics and Statistics, University of Limerick, Ireland.
          \mbox{e-mail}: natalia.kopteva@ul.ie}
       }
\date{\today}

\newcommand{\pt}{\partial}
\newcommand{\R}{\mathbb{R}}

\newcounter{EXAMPLEcounter}[section]
\renewcommand{\theEXAMPLEcounter}{\arabic{EXAMPLEcounter}}
\newcommand{\example}{\refstepcounter{EXAMPLEcounter}%
  \textbf{Test example~\theEXAMPLEcounter:}}
\newcommand{\ds}{\mathrm{d}s}
\newcommand{\dsigma}{\mathrm{d}\sigma}
\newcommand{\pmtrx}[1]{\ensuremath{\begin{pmatrix}#1 \end{pmatrix}}}
\newcommand{\W}{\mathcal{W}}
\newcommand{\PS}{\mathbb{P}}
\newcommand{\lrarrow}{\quad\Leftrightarrow\quad}
\newcommand{\ord}[1]{\ensuremath{\mathcal{O}\left(#1\right)}}
\newcommand{\LL}{{\mathcal L}}
\newcommand{\EE}{{\mathcal E}}
\newcommand{\RR}{{\mathcal R}}
\newcommand{\RS } {R_\tau}

\theoremstyle{plain}
\newtheorem{theorem}{Theorem}[section]
\newtheorem{lemma}[theorem]{Lemma}

\newtheorem{corollary}[theorem]{Corollary}
\newtheorem{remark}[theorem]{Remark}

\newcommand{\new}[1]{#1}

\begin{document}
  \maketitle

    \begin{abstract}
      Time-fractional parabolic equations with a Caputo time derivative of order $\alpha\in(0,1)$ are
      discretised in time using collocation methods, which assume that the Caputo derivative of the
      computed solution is piecewise-polynomial.
      For such discretisations of any order $m\ge 0$, with any choice of collocation points, we give
      sufficient conditions for existence and uniqueness of collocation solutions.
      Furthermore, we investigate the applicability and performance of such schemes in the context of the a-posteriori
      error estimation and adaptive time stepping algorithms.
    \end{abstract}



\section{Introduction}\label{sec:intro}

  This paper is devoted to a certain class of collocation discretisations in time for
  time-fractional parabolic equations, of order $\alpha\in(0,1)$, of the form
  \new{
  \begin{subequations}
    \label{eq:problem}
    \begin{align}
      \pt_t^{\alpha}u+\LL u&=f(x,t)&&\mbox{for}\;\;(x,t)\in\Omega\times(0,T],\\
      u(x,t)&=0                    &&\mbox{for}\;\;(x,t)\in\partial\Omega\times(0,T],
    \end{align}
  \end{subequations}
  }%
  also known as time-fractional subdiffusion equations.
  This equation is posed in a bounded Lipschitz domain  $\Omega\subset\R^d$ (where $d\in\{1,2,3\}$),
  subject to an initial condition $u(\cdot,0)=u_0$ in $\Omega$, and the homogeneous boundary condition $u=0$ on $\pt\Omega$ for $t>0$.
  The spatial operator $\LL$  is a linear second-order elliptic operator defined by
  \begin{equation} \label{LL_def}
    \LL u := \sum_{k=1}^d \Bigl\{-\pt_{x_k}\!(a_k(x)\,\pt_{x_k}\!u) + b_k(x)\, \pt_{x_k}\!u \Bigr\}+c(x)\, u,
  \end{equation}
  with sufficiently smooth coefficients $\{a_k\}$, $\{b_k\}$ and $c$ in $C(\bar\Omega)$, for which we assume that $a_k>0$ in $\bar\Omega$,
  and also either $c\ge 0$ or $c-\frac12\sum_{k=1}^d\pt_{x_k}\!b_k\ge 0$.
  The notation $\partial_t^\alpha$ is used for the Caputo fractional derivative in time \cite{Diet10} defined,
  for $t>0$, by
  \[
   \pt_t^{\alpha} u := J_t^{1-\alpha}(\pt_t u),\qquad
    J_t^{\beta} v(\cdot,t) := \frac{1}{\Gamma(\beta)}\int_0^t (t-s)^{\beta-1}v(\cdot,s)\,\ds,
  \]
  where $\Gamma(\cdot)$ is the Gamma function, and $\pt_t$ denotes the classical first-order partial derivative in~$t$.

  The idea  of any collocation scheme is to choose a finite-dimensional space of possible computed solutions
  and a set of collocation points in time, and to impose that the computed solution satisfy \eqref{eq:problem} (or its equivalent version) at the collocation points.
  For example, in \cite{FrK23,FrK25,FrK24b} we considered a family of continuous collocations methods of order $m\ge1$ such that
  \eqref{eq:problem} was satisfied by the continuous piecewise-polynomial computed solution at $m$ distinct collocation points on each time interval.

  In this paper, we shall consider a very different class of collocation methods, which assume that
  the Caputo derivative $\pt_t^\alpha u_\tau$ of the computed solution $u_\tau$ (rather than the computed solution itself)
  is a polynomial of degree $m\ge 0$ in time on each time interval.
  For $m=0$ (and, with the notation below, $\theta_0=1$), such methods were also considered in \cite{LiSal23,LiLiu19}.
  To be more precise, given a fixed $m\geq 0$, for an arbitrary temporal mesh  $0=t_0<t_1<\dots<t_M=T$ with mesh sizes $\tau_k=t_k-t_{k-1}$, define the collocation
  points by $t_k^\ell:=t_{k-1}+\tau_k\,\theta_\ell$ where $0\leq \theta_0<\theta_1<\dots<\theta_{m}\leq 1$.
  Next, define a space $\W_\tau$ of piecewise-polynomial functions
  and the related piecewise-polynomial interpolation operator $\Pi$ to $\W_\tau$ by
  \[
    \W_\tau:=\left\{v\colon v|_{[t_{i-1},t_i]}\in\PS_m(t_{i-1},t_i) \right\},
    \quad
    \bigl(w-\Pi w\bigr)(t_k^\ell)=0\;\;\forall\, \{t_k^\ell\} .
  \]
  Now we request that our computed solution $u_\tau$ satisfy the following semi-dis\-cre\-ti\-sa\-tion  of \eqref{eq:problem} in time:
  \begin{equation}\label{eq:coll_def}
    \pt_t^{\alpha}u_\tau+ \Pi (\LL u_\tau- f)=0\qquad \mbox{in~}\Omega\times(0,T],
  \end{equation}
  subject to the same initial and boundary conditions as imposed on \eqref{eq:problem}.
  Importantly, \eqref{eq:coll_def} implies that $\pt_t^{\alpha}u_\tau$, as a function of $t$, is in $\W_\tau$, while $u_\tau$ is not a polynomial even on $(0,t_1)$.
  If $\theta_0=0$ and $\theta_1=1$, the piecewise-polynomial $\pt_t^{\alpha}u_\tau$ becomes continuous in time, so we refer to the resulting method
  as  a \textit{continuous collocation} method; otherwise, $\pt_t^{\alpha}u_\tau$ is discontinuous at $t=t_k$, $k\ge1$,
  and the resulting method will be referred to as a \textit{discontinuous collocation} method
  (while $u_\tau$ is always continuous in time).

  Note that both the original equation \eqref{eq:problem} and
  the collocation scheme \eqref{eq:coll_def} allow an alternative equivalent representation, obtained by an application of $J_t^\alpha$
  to \eqref{eq:problem}  and \eqref{eq:coll_def}, respectively, which
  (combined with the well-known identity $J_t^\alpha\pt_t^{\alpha}v(t)=v(t)-v(0)$) yields
  $u+ J_t^\alpha(\LL u -f)=u_0$ and its discretisation in time
  \begin{equation}\label{eq:coll_def1}
    u_\tau+J_t^\alpha[\Pi (\LL u_\tau- f)]=u_0\qquad \mbox{in~}\Omega\times(0,T].
  \end{equation}
  Interestingly, in the simpler case of $\LL= c$ (i.e. without spatial derivatives in \eqref{eq:problem}),
  where $c$ is a constant, the original problem becomes
  $u+ c\, J_t^\alpha u =u_0+J_t^\alpha f$, i.e. a Volterra weakly singular equation of the second kind, while
  \eqref{eq:coll_def1} can be rewritten as
  $\Pi u_\tau+ c \,J_t^\alpha[\Pi  u_\tau]= u_0 +J_t^\alpha[\Pi f]$ $\forall\, t\in\{t_k^\ell\}$,
  which shows that $\Pi u_\tau\in \W_\tau$ is
  a standard collocation solution, considered, e.g., in \cite[Section 6.2]{Brunner04}
  (with a minor change in that $J_t^\alpha f$ is approximated by 
  $J_t^\alpha[\Pi f]$  in \eqref{eq:coll_def1}).

  Another equivalent representation of \eqref{eq:coll_def}, which may be particularly convenient for implementation,
  is obtained by introducing an auxiliary function $w_\tau\in\W_\tau$ as follows:
  \begin{align}\label{eq:coll_def_w}
    w_\tau+\Pi[ \LL J_t^\alpha w_\tau-f]+\LL u_0 =0,\qquad \pt_t^{\alpha}u_\tau= w_\tau\qquad \mbox{in~}\Omega\times(0,T],
  \end{align}
  where we also used $ u_\tau-u_0=J_t^\alpha\pt_t^{\alpha}u_\tau=J_t^\alpha w_\tau$.
  Note one apparent difference between \eqref{eq:coll_def1} and \eqref{eq:coll_def_w}
  in that the latter involves $\LL u_0$ (see Remark~\ref{rem_u0_nonsmth} for a further discussion).

  We also note the similarity of the discrete formulations \eqref{eq:coll_def1} and \eqref{eq:coll_def_w}, 
  as  they can respectively be rewritten at the collocation points as
  \begin{align}\label{eq:coll_def1_col}
    \Pi u_\tau+\LL J_t^\alpha (\Pi  u_\tau)&=u_0+J_t^\alpha(\Pi f)\qquad \mbox{in~}\Omega\times\{t_k^\ell\}, \\
    \label{eq:coll_def_w_col}
       w_\tau+ \LL J_t^\alpha w_\tau&=f-\LL u_0\qquad\qquad \mbox{in~}\Omega\times\{t_k^\ell\},
  \end{align}
  where we used $u_\tau=\Pi u_\tau$ $\forall\,\{t_k^\ell\}$ when rewriting \eqref{eq:coll_def1}, and
  $\Pi[ \LL J_t^\alpha w_\tau-f]=\LL J_t^\alpha w_\tau-f$ $\forall\,\{t_k^\ell\}$ when rewriting \eqref{eq:coll_def_w}.
  Thus, 
  both $\Pi  u_\tau$ and $w_\tau$ are solutions of the discrete
  \new{integral}
  equation with the operator
  $I+\LL J_t^\alpha$ and an appropriate right-hand side, subject to the homogeneous Dirichlet conditions on $\pt \Omega$.

  While there is substantial literature on collocation schemes of type \eqref{eq:coll_def} in the context of Volterra
  weakly singular equations of the second kind; see e.g. \cite[Section~6.2]{Brunner04} or \cite{LiangStynes24}%
  ---see also \cite{LPSV23}, where, essentially, systems of Volterra weakly singular equations of the second kind are
  considered%
  ---there are very few papers that address subdiffusion equation of type \eqref{eq:problem}, or its equivalent version
  $u+ J_t^\alpha(\LL u -f)=u_0$,  in this context.%
  \footnote{While \cite{LPSV23} is devoted to a 1d subdiffusion equation of type \eqref{eq:problem}, which is discretised
  in space using finite differences, and in time using  collocation schemes of type~\eqref{eq:coll_def_w}, the error constants
  in the a-priori error bounds depend on the matrix associated with the spatial discretisation
  (or, equivalently, on the spatial mesh size). Hence, essentially, the error analysis in \cite{LPSV23} addresses
  \eqref{eq:coll_def_w} in the context of a system of Volterra weakly singular equations of the second kind.}
  The case $m=0$ and $\theta_0=1$ for equations of type \eqref{eq:problem} was addressed in \cite{LiSal23,LiLiu19},
  the notable feature of this case being that the discretisation of $\pt_t^\alpha$  is associated with an M-matrix.

  In this paper, we address both discontinuous and continuous collocation schemes of type \eqref{eq:coll_def} for any $m\ge 0$ and any sets of collocation points.
  We establish the unique solvability for such schemes (along the lines of our earlier work \cite{FrK24b}) and, furthermore, investigate
  the applicability and performance of such schemes in the context of the a-posteriori error estimation and
  adaptive time stepping algorithms developed in  \cite{Kopteva22,Kopt_Stynes_apost22,FrK23}.

  The paper is organised as follows.
  In Section~\ref{sec:wellposed} we establish the unique solvability for discontinuous collocation schemes \eqref{eq:coll_def} with $\theta_0>0$.
  both continuous collocation schemes ($\theta_0=0$ and $\theta_1=1$) and discontinuous collocation schemes ($\theta_0=0$ and $\theta_1<1$).
  In Section~\ref{sec:res} we give residual-type a-posteriori estimates for the errors induced by discretisation in time,
  which naturally lead to adaptive time stepping algorithms with local time step criteria (while the original subdiffusion equation is non-local).
  We also describe computationally stable implementations of the considered methods, as well as a stable computation of the residuals.
  Finally, in Section~\ref{sec:numerics} we report numerical experiments, which support the applicability and reliability of the considered time stepping
  algorithm for subdiffusion equations; we also compare the resulting errors to those of another class of collocation methods considered in \cite{FrK23}.

  \textit{Notation.}\,
  We use  the standard inner product $\langle\cdot,\cdot\rangle$
  in the space $L_2(\Omega)$, as well as the standard spaces 
  $L_\infty(\Omega)$ and $H^1_0(\Omega)$, the latter denoting the space of functions in the
  Sobolev space $W_2^1(\Omega)$ vanishing on $\pt\Omega$, as well as its dual space $H^{-1}(\Omega)$.
  The notation of type $(L_2(\Omega))^n$ is used for the space of vector-valued functions
  $\Omega\rightarrow \R^n$ with all vector components in $L_2(\Omega)$, and the norm induced
  by $\|\cdot\|_{L_2(\Omega)}$ combined with any fixed norm in $\R^n$.
  We also use standard spaces
  $C([0,T];\,L_p(\Omega))$, and $W^1_\infty(t',t'';\,L_p(\Omega))$ for $p\in\{2,\,\infty\}$ for functions of $x$ and~$t$;
  see \cite[Section 5.9.2]{evans}.

\section{Well-posedness for discontinuous collocation schemes with $\theta_0>0$}\label{sec:wellposed}
  In this section, we shall establish the unique solvability for collocation schemes \eqref{eq:coll_def},
  which allow the equivalent representations \eqref{eq:coll_def1_col} and \eqref{eq:coll_def_w_col}
  at the collocation points.
  Notably, both these representations are discrete
  integro-differential equations with the same operator $I+\LL J_t^\alpha$, applied to a piecewise-polynomial-in-time solution (which, as a function of $t$, is in $\W_\tau$), and an appropriate right-hand side.

  Following \cite[Section 2]{FrK24b}, it suffices to consider only the first time interval $(0,t_1)$ (as the other intervals can be treated similarly), and furthermore, rescale the collocation scheme on this interval to the time interval $(0,1)$.
  In other words, it suffices to investigate the well-posedness of the problem
  \begin{equation}\label{eq:problemRescaled}
    U(x, \theta_\ell)+  \tau^\alpha \LL J_t^\alpha\,U(x, \theta_\ell)=F(x, \theta_\ell)\qquad \mbox{for}\;\; x\in\Omega,\;\;  \ell\in\{0,\ldots, m\},
  \end{equation}
  where
  $\tau:=\tau_1$ (or $\tau:=\tau_k$ when considering the interval $(t_{k-1},t_k)$), while
  $U(\cdot, t)$ is a polynomial of degree $m$ in time for $t\in(0,1]$ subject to
  $U=0$ on $\pt\Omega$.
  To be more precise,
  $U(\cdot, t)=\Pi u_\tau(\cdot ,t_{k-1}+ \tau_k t)$
  if \eqref{eq:problemRescaled} is associated with \eqref{eq:coll_def1_col} on $(t_{k-1},t_k)$,
  or
  $U(\cdot, t)=w_\tau(\cdot ,t_{k-1}+\tau_k t)$ if \eqref{eq:problemRescaled} is associated with \eqref{eq:coll_def_w_col}.
  The appropriate right-hand side $F(\cdot, t)$ in \eqref{eq:problemRescaled}  involves the collocation solution for $t\le t_{k-1}$ and
  depends on the considered formulation
  \eqref{eq:coll_def1_col} or \eqref{eq:coll_def_w_col}.

\subsection{Matrix representation of the collocation scheme \eqref{eq:problemRescaled}}\label{ssec_matrix}
  A solution $U$ of \eqref{eq:problemRescaled}, being a polynomial of degree $m$ in time, can be represented
  using the monomial basis $\{t^j\}_{j=0}^m$ as
  \begin{equation}\label{matr1}
    U(x,t)=\sum_{j=0}^m v_j(x)\,t^j = \Bigl(1,\,t,\,\cdots,\,t^m\Bigr)\pmtrx{v_0(x)\\v_1(x)\\\vdots\;\;\\v_m(x)}=:T(t)\, \vec{V}(x).
  \end{equation}
  Here we use the standard linear algebra multiplication, and, for convenience, we  highlight column
  vectors in such evaluations with an arrow (as in $\vec{V}$).
  Other column vectors of interest are
  $\vec{\theta}:=\big(\theta_0,\theta_1,\cdots,\theta_m\big)^{\!\!\top}$,
  and also $\vec{U}(x):=U(x,\vec{\theta})$ and $\vec{F}(x):=F(x,\vec{\theta})$
  (where a function is understood to be applied to a vector argument elementwise).

  Next, we represent $\vec{U}=U(\cdot,\vec{\theta})$ using
  the Vandermonde-type matrix $W$ as follows:
  \begin{equation}\label{matr2}
    W:=T(\vec{\theta})=\pmtrx{T(\theta_0)\\T(\theta_1)\\\vdots\;\;\;\\T(\theta_m)}=
       \pmtrx{1 & \theta_0 & \cdots &\theta_0^m\\
              1 & \theta_1 & \cdots &\theta_1^m\\
              \vdots\ &\vdots\ && \vdots\ \\
              1 & \theta_m &\cdots & \theta_m^m}
    \quad \Rightarrow\,
    \vec{U}(x)=U(x,\vec{\theta})
    =W \vec{V}(x).
  \end{equation}

  Before rewriting \eqref{eq:problemRescaled} for $\vec{U}(x)=U(x,\vec{\theta})$,
  we need a matrix representation of $J_t^\alpha U(x,\vec{\theta})$, for which
  the following two diagonal matrices will be useful:
  \begin{equation}\label{matr3}
    D_1:={\rm diag}(\theta_0^{\alpha},\cdots,\theta_m^{\alpha}),\;\;
    D_2:={\rm diag}(c_0,\cdots,c_m),\;\,
    c_j:=\frac{\Gamma(j+1)}{\Gamma(j+1+\alpha)}.
  \end{equation}
  Then we have
  \begin{align*}
    J_t^\alpha U(x,t)
      &= \sum_{j=0}^m v_j(x)\, J_t^\alpha t^j
       = \sum_{j=0}^m v_j(x) c_j t^{j+\alpha}
       = t^\alpha\, T(t) D_2 \vec V(x).
  \end{align*}
  Finally, $J_t^\alpha U(x,\vec{\theta})= D_1T(\vec{\theta})  D_2\,\vec{V}(x)=D_1W D_2\,\vec{V}(x)$.
  Hence, \eqref{eq:problemRescaled} is equivalent to
  \begin{equation}\label{VV_version}
    \Bigl(W +\tau^\alpha \LL \,D_1W  D_2\Bigr)\,\vec{V}(x)=\vec{F}(x).
  \end{equation}
  Assuming $\theta_0>0$ (see Section~\ref{sec:continuous} for modifications in the case of $\theta_0=0$),
  we set $\vec{Y}(x):=D_2\vec{V}(x)$, which yields the following version of our collocation scheme \eqref{eq:problemRescaled}:
  \begin{gather}\label{Y_version}
    \Bigl(D_1^*W D_2^*+\tau^\alpha \LL \,W  \Bigr)\,\vec{Y}(x)=\vec{F}^*(x), \qquad\mbox{where~}\theta_0>0,
  \end{gather}
  where we use the diagonal matrices $D_1^*:=D_1^{-1}$ and $D_2^*:=D_2^{-1}$, and the right-hand side vector $\vec{F}^*(x):=D_1^{-1}\vec{F}(x)$.
  The above representation \eqref{Y_version} is of type \cite[(10)]{FrK24b}, albeit with somewhat different matrices $D_1^*$, $D_2^*$, and $W$;
  hence, both approaches used in \cite{FrK24b}, the Lax-Milgram Theorem and eigenvalue tests, will be employed below to show the well-posedness of
  \eqref{eq:problemRescaled}.

  \begin{remark}\label{rem_inapplble}[ODE system vs. subdiffusion equation]
    As discussed in \cite[Section 2.1]{FrK24b}, if  \eqref{eq:problem} is a system of $n$ fractional-order ordinary differential equations, with
    a linear operator $\LL: \R^n\rightarrow \R^n$, the norm of which is $\|\LL\|_\star$ (induced by any vector norm in $\R^{n}$), then
    to ensure that \eqref{Y_version} has a unique solution, it suffices to assume that
    \[
      \tau^\alpha\|\LL\|_\star<\|W^{-1} (D_1^*W D_2^*)\|,
    \]
    where the matrix norm $\|\cdot\|$ is induced by any vector norm in $\R^{m+1}$.
    However, the above sufficient condition becomes very restrictive if $\LL=\LL_h$ is a finite-dimensional discrete operator approximating an elliptic operator.
    For example, for the standard finite difference discretisation $\LL_h$ of the differential operator $-\partial_x^2$ in the domain $\Omega=(0,1)$,
    it is well-known that $\|\LL\|_\star=O( DOF^{2})$, where $DOF$ is the number of degrees of freedom in space, so one would need to impose
    a very restrictive condition
    $\tau^\alpha DOF^{2}\le C_m$, with $C_m$ depending on $m$ and $\{\theta_{\ell}\}$.
    Hence, in this paper we explore alternative approaches to establishing the well-posedness of \eqref{Y_version}.
  \end{remark}

\subsection{Well-posedness by means of the Lax-Milgram Theorem}\label{ssec_Lax}
  As representation \eqref{Y_version} is of type \cite[(10)]{FrK24b},
  and additionally the condition $c-\frac12\sum_{k=1}^d\pt_{x_k}\!b_k\ge 0$ implies that the operator $\LL$ in \eqref{eq:problem} is associated with a coercive bilinear form,
  we can immediately apply the following result.

  \begin{theorem}[\!\!{\cite[Theorem~5]{FrK24b}}]\label{the_LaxM}
    Suppose that the bilinear form $a(v,w):=\langle \LL v, w\rangle$ on the space $H^1_0(\Omega)$ is bounded and coercive.
    Additionally, suppose that there exists a diagonal matrix $D\in \R^{m+1,m+1}$ with strictly positive diagonal elements such that
    the symmetric matrix $W^{\top}D WD^*_2+(W^{\top}D WD^*_2)^{\top}$ is positive-semidefinite.
    Then \eqref{Y_version} is associated with a bounded and coercive
    bilinear form,
    \new{
    \begin{equation*}
      A (\vec V, \vec V^*)=
      \langle W^{\top} D WD_2^*\,\vec{V}, \vec V^* \rangle+\tau^\alpha\langle W^{\top} D \LL W\vec V,\vec V^* \rangle
    \end{equation*}
    }%
    on the space $(H^1_0(\Omega))^{m+1}$. If, additionally, $\vec F^*\in (H^{-1}(\Omega))^{m+1}$, then
    there exists a unique solution to problem \eqref{Y_version}. Equivalently, if
    $\{F(\cdot, \theta_\ell)\}_{\ell=0}^m\in (H^{-1}(\Omega))^{m+1}$, then
    \eqref{eq:problemRescaled} has a unique solution $\{U(\cdot, \theta_\ell)\}_{\ell=0}^m$
    in $(H^1_0(\Omega))^{m+1}$.
  \end{theorem}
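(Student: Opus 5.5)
The plan is to recast \eqref{Y_version} as a variational problem on $(H^1_0(\Omega))^{m+1}$ and apply the Lax--Milgram Theorem. Premultiplying \eqref{Y_version} by the matrix $W^{\top}D$ is harmless, because $W$ is an invertible Vandermonde matrix (the $\theta_\ell$ are distinct) and $D$ is diagonal with positive entries. Testing componentwise against $\vec V^*\in(H^1_0(\Omega))^{m+1}$ and writing $\vec V$ for the unknown $\vec Y$ then gives exactly the form $A(\vec V,\vec V^*)$ in the statement, with right-hand side $\langle W^{\top}D\vec F^*,\vec V^*\rangle$. Here $\tau^\alpha\langle W^{\top}D\LL W\vec V,\vec V^*\rangle$ is interpreted as $\tau^\alpha\sum_{i,j}(W^{\top}DW)_{ij}\,a(V_j,V^*_i)$. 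Since $\LL$ acts only in $x$, it commutes with the constant matrices.

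Boundedness is the routine part. The first term is bounded by $C\|\vec V\|_{L_2}\|\vec V^*\|_{L_2}\le C\|\vec V\|_{H^1}\|\vec V^*\|_{H^1}$ via Poincar\'e's inequality. The second term is bounded by $\tau^\alpha\max|(W^\top DW)_{ij}|$ times the bound of $a(\cdot,\cdot)$, summed over $i,j$. Likewise, the right-hand side is a bounded functional on $(H^1_0)^{m+1}$ whenever $\vec F^*\in(H^{-1})^{m+1}$.

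For coercivity, take $\vec V^*=\vec V$. The first term equals $\frac12\int_\Omega \vec V^{\top}\bigl(W^\top DWD_2^*+(W^\top DWD_2^*)^\top\bigr)\vec V\,dx\ge0$ by the positive-semidefiniteness hypothesis. For the second term, set $\vec Z:=W\vec V$. Since $D$ is diagonal, we get $\vec V^\top W^\top D\LL W\vec V=\sum_\ell d_\ell\, Z_\ell\LL Z_\ell$, so this term equals $\tau^\alpha\sum_\ell d_\ell\, a(Z_\ell,Z_\ell)\ge \tau^\alpha c_a\min_\ell d_\ell\,\|W\vec V\|^2_{H^1}\ge c\,\|\vec V\|^2_{H^1}$. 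The last step uses the invertibility of $W$. Using diagonal $D$ is precisely what makes this step work: it decouples the term into individual coercive pieces. Lax--Milgram then yields a unique solution $\vec Y$. This solves \eqref{Y_version}, since $W^\top D$ can be inverted.

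Finally, the equivalence with \eqref{eq:problemRescaled} follows from invertibility of $D_1,D_2,W$. We have $\vec F^*=D_1^{-1}\vec F$, so $\vec F\in(H^{-1})^{m+1}$ if and only if $\vec F^*\in(H^{-1})^{m+1}$. Also $\vec U=WD_2^{-1}\vec Y\in(H^1_0)^{m+1}$, so existence and uniqueness transfer. I expect the main obstacle to be the coercivity step. Specifically, one has to see that the first term is only semidefinite, so all coercivity must come from the $\LL$ term, and this requires exactly the diagonal structure of $D$ together with $\tau>0$. The constant then degenerates as $\tau\to0$, but remains positive for each fixed $\tau$.
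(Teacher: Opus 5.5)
Your overall plan is the right one: Lax--Milgram, with the mass-type term contributing only a nonnegative quantity, and coercivity coming from the elliptic term, which the diagonal $D$ decouples through $\vec Z=W\vec V$. The paper gives no proof here and simply imports the result from \cite{FrK24b}. However, the step linking the hypothesis to \eqref{Y_version} is wrong, so your coercivity argument is applied to the wrong matrix.

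Premultiplying \eqref{Y_version} by $W^{\top}D$ produces $W^{\top}DD_1^*WD_2^*+\tau^\alpha W^{\top}D\LL W$, not $W^{\top}DWD_2^*+\tau^\alpha W^{\top}D\LL W$. You have dropped $D_1^*=\mathrm{diag}(\theta_\ell^{-\alpha})\neq I$. The hypothesis gives no control over the symmetric part of $W^{\top}DD_1^*WD_2^*$, and it can be indefinite. Take $m=1$, $\alpha=\tfrac12$, $\theta_0=\tfrac23$, $\theta_1=1$, and the paper's own choice $D=\mathrm{diag}(\theta_1,\theta_0)$, which is admissible by \eqref{m1_lax}. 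Then $W^{\top}DWD_2^*+(W^{\top}DWD_2^*)^{\top}$ equals $\tfrac{10}{3}\Gamma(\tfrac32)$ times the all-ones $2\times 2$ matrix, so it is positive semidefinite. In contrast, $W^{\top}DD_1^*WD_2^*+(W^{\top}DD_1^*WD_2^*)^{\top}$ has determinant $-\Gamma(\tfrac32)^2(\sqrt{3/2}-1)^2/9<0$. Testing with $\vec V=\vec e\,\psi$, where $\vec e$ is a negative direction of that matrix and $\psi\in H^1_0(\Omega)$ is fixed, shows that the form you actually obtain is not coercive for small $\tau$. If instead you keep the first term $W^{\top}DWD_2^*$ together with your right-hand side $\langle W^{\top}D\vec F^*,\vec V^*\rangle$, the Lax--Milgram solution solves $(WD_2^*+\tau^\alpha\LL W)\vec Y=\vec F^*$. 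That is not \eqref{Y_version}.

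The repair is to premultiply by $W^{\top}DD_1$ instead, which is legitimate since $\theta_0>0$. Equivalently, test $(WD_2^*+\tau^\alpha D_1\LL W)\vec Y=\vec F$ against $W^{\top}D$. The first term is then exactly $W^{\top}DWD_2^*$, so the hypothesis applies. The elliptic term becomes $\tau^\alpha\sum_\ell d_\ell\theta_\ell^{\alpha}\,a(Z_\ell,Z_\ell)$. Since $DD_1$ is still diagonal with positive entries, your decoupling argument goes through with $\min_\ell d_\ell\theta_\ell^{\alpha}$ in place of $\min_\ell d_\ell$. The right-hand side becomes $\langle W^{\top}D\vec F,\vec V^*\rangle$, which uses $\vec F\in(H^{-1}(\Omega))^{m+1}$ directly. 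Note that the displayed formula for $A$ in the statement has the same slip: its elliptic term should contain $DD_1$ rather than $D$. Reproducing that formula is therefore no substitute for multiplying out \eqref{Y_version} yourself.
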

  
  For the lowest order cases \cite[Corollary 8]{FrK24b} provides a blueprint on finding such a matrix $D$.
  A calculation shows that for the trivial case  $m=0$ and $\theta_0>0$, one uses the matrix $D=1\in\R^{1,1}$, while for the case
  \begin{equation}\label{m1_lax}
    m=1,\qquad 0<\frac{\theta_0}{\theta_1}\leq \frac{1}{1+\alpha},
  \end{equation}
  Theorem~\ref{the_LaxM} applies with the matrix $D=\mathrm{diag}\{\theta_1,\theta_0\}$.
  For higher-order collocation schemes the construction of a suitable $D$ becomes more intricate,
  so semi-computational techniques may be useful for particular sets of collocation points $\{\theta_{\ell}\}$. Alternatively,
  one can employ an eigenvalue test considered in the next section.

  \begin{corollary}[Galerkin finite element discretisation]
    The existence and uniqueness results of Theorem~\ref{the_LaxM}
    remain valid if $(H^1_0(\Omega))^{m+1}$ is replaced by $(S_h)^{m+1}$, where $S_h$ is a finite-dimensional subspace of
    $H^1_0(\Omega)$.
    Equivalently, they apply to the corresponding spatial discretisation of \eqref{Y_version}.
  \end{corollary}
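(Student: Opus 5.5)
The plan is to repeat the Lax--Milgram argument behind Theorem~\ref{the_LaxM}, with $H^1_0(\Omega)$ replaced by the closed subspace $S_h$. Being finite-dimensional, $S_h$ is a Hilbert space with the inherited $H^1$ norm.

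The discrete problem is the Galerkin version of \eqref{Y_version}. We seek $\vec Y_h\in(S_h)^{m+1}$ such that
\[
A(\vec Y_h,\vec V^*)=\langle W^{\top}D\,\vec F^*,\vec V^*\rangle\qquad\forall\,\vec V^*\in(S_h)^{m+1},
\]
where $A$ is the bilinear form from Theorem~\ref{the_LaxM}. The first step is to observe that boundedness and coercivity of $A$ on $(H^1_0(\Omega))^{m+1}$ pass to any subspace with the same constants. This holds because both properties are inequalities required for all arguments in the space, and hence for all arguments in a subset.

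The right-hand side functional $\vec V^*\mapsto\langle W^{\top}D\,\vec F^*,\vec V^*\rangle$ stays bounded on $(S_h)^{m+1}$ when $\vec F^*\in(H^{-1}(\Omega))^{m+1}$. Lax--Milgram on $(S_h)^{m+1}$ then gives existence and uniqueness.

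Since the spaces are finite-dimensional, one can argue even more directly. Coercivity gives injectivity of the associated square linear system, and injectivity of a square system gives solvability.

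The remaining step is to translate back to \eqref{eq:problemRescaled}, exactly as in Theorem~\ref{the_LaxM}. Set $\vec V=D_2^{-1}\vec Y$ and $\vec U=W\vec V$. Because $W$, $D$, $D_1$ and $D_2$ are nonsingular scalar matrices acting componentwise, they map $(S_h)^{m+1}$ onto itself. The discrete version of \eqref{eq:problemRescaled} then has a unique solution $\{U_h(\cdot,\theta_\ell)\}_{\ell=0}^m\in(S_h)^{m+1}$.

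The only point needing care is this equivalence between the Galerkin form of \eqref{Y_version} and the spatially discretised \eqref{eq:problemRescaled}. I would handle it by testing with $W^{\top}D$ applied to test functions, and by noting that $W^{\top}D$ is invertible. Here $W$ is invertible because the $\theta_\ell$ are distinct.
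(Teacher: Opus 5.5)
Your proposal is correct and follows the paper's reasoning. The paper gives no separate proof: it treats the corollary as immediate from Theorem~\ref{the_LaxM}, because boundedness and coercivity of the bilinear form, and boundedness of the right-hand side functional, carry over unchanged to the closed subspace $(S_h)^{m+1}$. Your back-translation step is also fine; precisely, it amounts to taking test functions $\vec Z=DW\vec V^*$ (up to a diagonal factor $D_1$), which is harmless because these invertible scalar matrices map $(S_h)^{m+1}$ onto itself.
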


\subsection{Well-posedness by means of an eigenvalue test}\label{ssec_eigen}

  Throughout this section, we assume that $\LL$ has a set of real positive eigenvalues
  $0<\lambda_1\le\lambda_2\le \lambda_3\le\ldots$ and a corresponding basis of orthonormal eigenfunctions
  $\{\psi_n(x)\}_{n=1}^\infty$.
  This assumption is immediately satisfied if $\LL$ in \eqref{LL_def} is
  self-adjoint (i.e. $b_k=0$ for $k=1,\ldots,d$) and $c\ge 0$; see, e.g., \cite[Section~6.5.1]{evans}  for further details.
  (Otherwise, even if $\LL$ is non-self-adjoint, the analysis can sometimes,  as, e.g., for
  $\LL = -\sum_{k=1}^d \bigl\{\pt^2_{x_k} + (\pt_{x_k}\! B(x))\, \pt_{x_k} \bigr\}+c(x)$
  \cite[problem~8.6.2]{evans}, be reduced to the self-adjoint case.)

  With the above assumption, the well-posedness of problem \eqref{Y_version} can be investigated using
  an eigenfunction expansion of its solution $\vec{Y}(x)$.
  Indeed, using the eigenfunction expansions $\vec{Y}(x)=\sum_{n=1}^\infty \vec{Y}_n\, \psi_n(x)$
  and $\vec{F}^*(x)=\sum_{n=1}^\infty \vec{F}^*_n\, \psi_n(x)$, one reduces \eqref{Y_version} to
  the following matrix equations for vectors $\vec{Y}_n$ $\forall\,n\ge1$:
  \begin{equation}\label{lambda_n_prob}
    \Bigl(D_1^*W D_2^*+\tau^\alpha \lambda_n \,W  \Bigr)\,\vec{Y}_n=\vec{F}_n^*\,.
  \end{equation}

  The solvability of \eqref{lambda_n_prob}, and thus of \eqref{Y_version}, can be investigated with the help of the following lemma.
  \begin{lemma}[{\!\!\cite[Lemma 11]{FrK24b}}]\label{lem_M}
    For any fixed invertible matrix $M\in\R^{m+1,m+1}$, consider $R(\lambda\,;M):=(M+\lambda I)^{-1}$ for $\lambda\in \R$. Then
    $\lim_{\lambda\rightarrow+\infty}\| R(\lambda\,;M)\|=0$.
    Furthermore, there is a constant $C=C(M)>0$ such that
    \begin{equation}\label{resolvent}
      0<\| R(\lambda\,;M)\|\le C\;\;\;\forall\lambda\ge 0\;\;\Leftrightarrow\;\;
      \mbox{$M$ has no negative real eigenvalues}
    \end{equation}
    (where  $\|\cdot\|$ denotes the  matrix norm induced by 
    any vector norm in $\R^{m+1}$).
  \end{lemma}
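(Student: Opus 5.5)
The argument splits into the three claims of the lemma: the limit statement, the implication ``no negative real eigenvalues $\Rightarrow$ uniform bound'', and its converse. Since all norms on $\R^{m+1}$ are equivalent, any fixed induced matrix norm may be used, with only the constant $C$ changing.

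For the limit, write $M+\lambda I=\lambda(I+\lambda^{-1}M)$. For $\lambda>2\|M\|$ a Neumann series argument gives $\|(I+\lambda^{-1}M)^{-1}\|\le 2$. Hence $\|R(\lambda;M)\|\le 2/\lambda\to0$ as $\lambda\to+\infty$. Positivity of $\|R(\lambda;M)\|$ is automatic wherever the inverse exists, since an inverse matrix is never zero.

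For the direction ``$\Leftarrow$'', the key observation is that $M+\lambda I$ is singular exactly when $-\lambda$ is an eigenvalue of $M$.
\begin{itemize}
\item At $\lambda=0$, $M$ is invertible by assumption.
\item For $\lambda>0$, $M$ has no eigenvalue $-\lambda<0$ by assumption.
\end{itemize}
So $M+\lambda I$ is invertible for every $\lambda\ge0$. The map $\lambda\mapsto R(\lambda;M)$ is continuous on $[0,\infty)$, because matrix inversion is continuous on invertible matrices (equivalently, use the adjugate formula with nonvanishing determinant). By the first step, $\|R(\lambda;M)\|\le 1$ for $\lambda\ge\Lambda:=2\|M\|+2$. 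On the compact interval $[0,\Lambda]$ the continuous function $\|R(\lambda;M)\|$ is bounded. Taking $C$ as the maximum of the two bounds gives the result.

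For the direction ``$\Rightarrow$'', suppose $M$ has a negative real eigenvalue $-\lambda_0$ with $\lambda_0>0$. Then $M+\lambda_0I$ is singular, so $R(\lambda_0;M)$ does not exist. This contradicts the assumption that $\|R(\lambda;M)\|$ is finite for all $\lambda\ge0$.

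One could add that $\|R(\lambda;M)\|\ge 1/\|M+\lambda I\|\ge 1/\operatorname{dist}(-\lambda,\sigma(M))\to\infty$ as $\lambda\to\lambda_0$, so the resolvent blows up near $\lambda_0$ even when only values off the spectrum are considered. The only point needing care is this interpretation of the left-hand side of \eqref{resolvent} as requiring the resolvent to be defined for all $\lambda\ge0$; with it, the converse is immediate. The compactness-plus-continuity step is the main technical ingredient, and it is standard.
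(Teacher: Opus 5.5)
Your proof is correct and follows the same route as the paper's: the factorisation $M+\lambda I=\lambda(I+\lambda^{-1}M)$ for the decay, non-existence of $R(\lambda_0;M)$ at a negative eigenvalue $-\lambda_0$ for ``$\Rightarrow$'', and continuity of $\lambda\mapsto\|R(\lambda;M)\|$ on $[0,\infty)$ combined with decay at infinity for ``$\Leftarrow$''. Your Neumann-series bound $\|R(\lambda;M)\|\le 2/\lambda$ and explicit compactness split just make the paper's sketch quantitative.

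One slip in your optional closing remark, which does not affect the proof: the middle inequality in $\|R\|\ge 1/\|M+\lambda I\|\ge 1/\mathrm{dist}(-\lambda,\sigma(M))$ is reversed, since $\|M+\lambda I\|\ge\rho(M+\lambda I)\ge\mathrm{dist}(-\lambda,\sigma(M))$. The blow-up near $\lambda_0$ should instead be justified by $\|R(\lambda;M)\|\ge\rho\bigl(R(\lambda;M)\bigr)=1/\mathrm{dist}(-\lambda,\sigma(M))$.
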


  \begin{proof}
    We shall outline the proof for completeness. First, for any fixed $M$, once $\lambda>0$ is sufficiently large, $M+\lambda I$ is invertible,
    while $R(\lambda\,;M)=\lambda^{-1}(\lambda^{-1}M+I)^{-1}\rightarrow \lambda^{-1} I$, so $\| R(\lambda\,;M)\| \rightarrow 0$, as $\lambda\rightarrow+\infty$.
    To prove $\Rightarrow$ in \eqref{resolvent}, note that if $M$ has a real negative eigenvalue $-\lambda^*<0$, then  $\| R(\lambda^*\,;M)\|$ is not defined.
    Finally, to prove  $\Leftarrow$, note that $M+\lambda I$ is invertible for any $\lambda\le 0$,
    so $\| R(\lambda\,;M)\|>0$ $\forall\, \lambda\ge 0$. As $\| R(\lambda\,;M)\|$ is a continuous positive function
    of $\lambda$ on $[0,\infty)$, which decays as $\lambda\rightarrow +\infty$, it has to be bounded by a positive constant $C=C(M)$.
  \end{proof}

  As all matrices $D_1$, $D_2$, and $W$ are invertible for $\theta_0>0$, in view of Lemma~\ref{lem_M},
  for the unique solvability of \eqref{lambda_n_prob}, it suffices to check that
  $M:=W^{-1}D_1^*W D_2^*=W^{-1}D_1^{-1}W D_2^{-1}$ has no negative real eigenvalues, which leads us to the following result.

  \begin{corollary}\label{cor_M_matrix}
    Suppose that the operator $\LL$ has a set of real positive eigenvalues
    and a corresponding basis of orthonormal eigenfunctions,
    and $\vec F(x)$ is sufficiently smooth.
    If, additionally, the matrix $M=W^{-1}D_1^{-1}W D_2^{-1}$ has no negative real eigenvalues, then
    there exists a unique solution to problem \eqref{Y_version}, or, equivalently,
    to problem \eqref{eq:problemRescaled}.
  \end{corollary}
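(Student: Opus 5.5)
The plan is to reduce \eqref{Y_version} to the family of finite-dimensional systems \eqref{lambda_n_prob}, solve each of them with Lemma~\ref{lem_M}, and then sum the eigenfunction series.

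\emph{Reduction to a resolvent.} Since $\theta_0>0$, the matrices $D_1$, $D_2$ and $W$ are invertible; $W$ is invertible because it is a Vandermonde matrix with distinct nodes $\theta_\ell$. Hence $D_1^*WD_2^*+\tau^\alpha\lambda_n W = W\bigl(M+\tau^\alpha\lambda_n I\bigr)$ with $M=W^{-1}D_1^{-1}WD_2^{-1}$. Therefore \eqref{lambda_n_prob} is equivalent to
\[
\vec Y_n=R(\tau^\alpha\lambda_n;M)\,W^{-1}\vec F_n^*.
\]
The matrix $M$ is invertible, and by assumption it has no negative real eigenvalues. Moreover $\tau^\alpha\lambda_n>0$ for every $n$. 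So Lemma~\ref{lem_M} shows that $M+\tau^\alpha\lambda_n I$ is invertible, and gives the uniform bound $\|R(\tau^\alpha\lambda_n;M)\|\le C(M)$ for all $n\ge1$. Thus each $\vec Y_n$ exists and is unique, with $|\vec Y_n|\le C\,\|W^{-1}\|\,\|D_1^{-1}\|\,|\vec F_n|$, where $\vec F_n$ are the eigen-coefficients of $\vec F$.

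\emph{Summation and uniqueness.} Set $\vec Y(x):=\sum_n \vec Y_n\psi_n(x)$. The uniform bound on the resolvent and orthonormality of $\{\psi_n\}$ give $\|\vec Y\|_{(L_2(\Omega))^{m+1}}\le C'\|\vec F\|_{(L_2(\Omega))^{m+1}}$. To place $\vec Y$ in the natural domain, in particular $(H^1_0(\Omega))^{m+1}$, and to apply $\LL$ termwise, I will invoke the assumption that $\vec F$ is sufficiently smooth. For instance, if $\sum_n\lambda_n^2|\vec F_n|^2<\infty$, then $\sum_n\lambda_n^2|\vec Y_n|^2<\infty$, so $\LL\vec Y=\sum_n\lambda_n\vec Y_n\psi_n$ converges in $L_2$, and $\vec Y$ satisfies \eqref{Y_version} coefficientwise and hence as a function. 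For uniqueness, take any solution of \eqref{Y_version} with zero right-hand side. Testing it against each $\psi_n$ gives \eqref{lambda_n_prob} with $\vec F_n^*=0$, so $\vec Y_n=0$ for all $n$, and completeness of $\{\psi_n\}$ yields $\vec Y=0$. Finally, $\vec V=D_2^{-1}\vec Y$ and $\vec U=W\vec V$ transfer existence and uniqueness to \eqref{eq:problemRescaled}.

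\emph{Main obstacle.} The only delicate step is convergence of the series. It is handled by the $\lambda$-uniform resolvent bound from Lemma~\ref{lem_M}, which is exactly why the hypothesis excludes negative real eigenvalues of $M$ rather than merely requiring invertibility for each individual $\lambda_n$. The remaining steps are routine bookkeeping.
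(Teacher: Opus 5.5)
Your proposal is correct and follows the paper's own route: write $D_1^{-1}WD_2^{-1}+\tau^\alpha\lambda_n W=W(M+\tau^\alpha\lambda_n I)$, apply Lemma~\ref{lem_M} to each modal system \eqref{lambda_n_prob}, and reassemble the eigenfunction expansion; your treatment of the series and of uniqueness makes explicit what the paper leaves implicit. One small correction to your closing remark: for the elliptic $\LL$ considered here, $\lambda_n\to\infty$ and $\|R(\lambda;M)\|\to0$ as $\lambda\to\infty$, so a uniform bound over the discrete set $\{\tau^\alpha\lambda_n\}$ would already follow from invertibility for each individual $n$, and the real role of the hypothesis on $M$ is to guarantee solvability for every $\tau>0$ without any knowledge of the spectrum of $\LL$.
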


  It remains to check the hypothesis on $M$ made in the above corollary.

  \begin{theorem}\label{thm:existCollDisc}
    For any $m\geq 0$ and any set of collocation points $\{\theta_\ell\}_{\ell=0}^m$
    with $\theta_0> 0$, the matrix $M=W^{-1}D_1^{-1}W D_2^{-1}$ has no negative real eigenvalues, so
    the well-posedness of \eqref{eq:problemRescaled} follows from Corollary~\ref{cor_M_matrix}.
  \end{theorem}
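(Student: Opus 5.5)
Following the reduction preceding Corollary~\ref{cor_M_matrix}, we argue by contradiction. Suppose $M\vec Y=-\mu\vec Y$ for some $\mu>0$ and some $\vec Y\neq 0$. Put $\vec V:=D_2^{-1}\vec Y$ and multiply by $D_1W$. This gives $W\vec V+\mu D_1WD_2\vec V=0$. By the computation of $J_t^\alpha U$ in Section~\ref{ssec_matrix}, this says exactly that the nonzero polynomial $p(t):=T(t)\vec V\in\PS_m$ satisfies
\[
  q(\theta_\ell):=p(\theta_\ell)+\mu\, J_t^\alpha p(\theta_\ell)=0,\qquad \ell=0,\dots,m .
\]
Equivalently, $-1/\mu$ is a real negative eigenvalue of the operator $K:=\Pi J_t^\alpha$ on $\PS_m$, where $\Pi$ is the interpolation operator at $\{\theta_\ell\}\subset(0,1]$. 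So the theorem reduces to the following claim. \emph{The scalar collocation scheme for $u+\mu J_t^\alpha u=0$ with $\mu>0$ has only the trivial solution, for every $\mu>0$, every $m$, and every set of nodes in $(0,1]$.}

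To prove the claim, I will write $J_t^\alpha p(t)=t^\alpha r(t)$ with $r(t)=T(t)D_2\vec V$. Here $c_j=\Gamma(j+1)/\Gamma(j+1+\alpha)=\frac{1}{\Gamma(\alpha)}\int_0^1 s^j(1-s)^{\alpha-1}\,\ds$, so
\[
  r(t)=\frac{1}{\Gamma(\alpha)}\int_0^1 p(ts)(1-s)^{\alpha-1}\,\ds ,
\]
which makes $r$ a positive averaging of $p$ over $[0,t]$. The function $q=p+\mu t^\alpha r$ would then have $m+1$ distinct zeros in $(0,1]$. The aim is to show that $q$ has at most $m$ zeros in $(0,\infty)$ whenever $p\ne0$ has degree at most $m$.

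My first attempt will be a sign-change argument. The map $p\mapsto q$, viewed as $p\mapsto(I+\mu J_t^\alpha)p$, should be variation-diminishing. Its inverse $p=(I+\mu J_t^\alpha)^{-1}q=q-\mu\,(e_{\alpha}'\ast q)$ involves the completely monotone Mittag-Leffler kernel $-\frac{d}{dt}E_\alpha(-\mu t^\alpha)$. I would try to show that $p$ has at least as many sign changes on $(0,1]$ as $q$, using total positivity of that kernel. This would force $\deg p\ge m+1$, which is a contradiction. Exploiting $J_t^\alpha t^j=c_jt^{j+\alpha}$, I would carry out the counting through a generalized Rolle theorem for the Müntz-type system $\{t^j,t^{j+\alpha}\}$.

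If this route stalls, the fallback is purely matrix-theoretic. $W$ is strictly totally positive, being a Vandermonde matrix with $0<\theta_0<\dots<\theta_m$. The sequence $c_j$ is a moment sequence, so the Hankel-type structure should make $D_1WD_2W^{-1}$ (the matrix of $K$ in the Lagrange basis) similar to a sign-regular, ideally oscillatory, matrix. The Gantmacher--Krein theorem would then give that all its eigenvalues are positive. Both routes must handle the same difficulty: the nodes $\theta_\ell$ are arbitrary. I expect the main obstacle to be this zero- or sign-change counting step, specifically making the variation-diminishing property of $I+\mu J_t^\alpha$ (or the oscillatory structure of $K$) rigorous for general $\{\theta_\ell\}$.
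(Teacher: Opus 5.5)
\textbf{There is a genuine gap.} Your reduction is correct: an eigenpair $M\vec Y=-\mu\vec Y$ with $\mu>0$ is the same as a nonzero $p=T(t)\vec V\in\PS_m$ for which $q=p+\mu J_t^\alpha p$ vanishes at the $m+1$ distinct nodes $\theta_\ell\in(0,1]$. But the step that carries the whole content of the theorem is left open, and neither of your routes supplies it. That step is showing that such a $q$ has at most $m$ zeros in $(0,\infty)$.

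The fallback route cannot work at all. It aims to show that all eigenvalues of $K=\Pi J_t^\alpha$ (equivalently, of $M^{-1}=D_2W^{-1}D_1W$) are real and positive, and this is false. Take $m=1$, $(\theta_0,\theta_1)=(1/3,2/3)$ and $\alpha=1/2$. The matrix of $K$ in the Lagrange basis has trace $\approx1.176$ and determinant $\approx0.400$, so $\mathrm{tr}^2-4\det\approx-0.218<0$. Its eigenvalues are therefore a non-real conjugate pair, so $K$ is not similar to any oscillatory or totally nonnegative matrix.

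The primary route is not a proof either, for three reasons. First, total positivity of the resolvent kernel $k$, even if it held, concerns the map $q\mapsto k*q$. It says nothing about how the number of sign changes of $p=q-k*q$ compares with that of $q$. Second, vanishing at $m+1$ nodes does not give $m+1$ sign changes of $q$, since some zeros may be touching zeros. Third, a Rolle/Haar argument on the whole Müntz space spanned by $\{t^j,t^{j+\alpha}\}_{j=0}^m$, which has dimension $2m+2$, only bounds the number of positive zeros by $2m+1$.

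\textbf{The missing idea} is to use the sign pattern of the coefficients. Write $q(t)=\sum_{j=0}^m\bigl(v_j\,t^{j}+\mu c_j v_j\,t^{j+\alpha}\bigr)$, whose exponents are ordered as $0<\alpha<1<1+\alpha<\dots<m<m+\alpha$. Since $\mu c_j>0$, the coefficients of $t^j$ and $t^{j+\alpha}$ have the same sign. Hence the ordered coefficient sequence has at most $m$ sign changes. The generalized Descartes rule of signs (Laguerre) then gives at most $m$ positive zeros, counted with multiplicity. Because $\vec V\neq0$ implies $q\not\equiv0$, this contradicts $q(\theta_\ell)=0$ for $\ell=0,\dots,m$.

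This is the zero-counting form of the paper's proof. The paper expands $\det(WD_2^{-1}-\lambda D_1W)=\sum_j(-\lambda)^ja_j$ column by column and shows $a_j>0$. Each contribution there is a positive multiple of a generalized Vandermonde determinant with strictly increasing exponents $\beta_k\in\{k,k+\alpha\}$ and nodes $0<\theta_0<\dots<\theta_m$. Such a determinant is positive by strict total positivity of $\theta^\beta$.
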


  \begin{proof}
    It holds for any eigenvalue $\lambda$ of $M$ that
    \begin{align*}
      \det(M-\lambda I)=0
       &\lrarrow
      \det(WD_2^{-1}-\lambda D_1W)=0.
    \end{align*}
    The two matrices here are given by
    \begin{align*}
      M_1:=WD_2^{-1} &= \pmtrx{[c]1&\theta_0&\cdots&\theta_0^m\\
                              \vdots&\ddots &&\vdots\\
                              1&\theta_m&\cdots&\theta_m^m}
                        \pmtrx{c^*_0\\&\ddots\\&&c^*_m}
                     = \pmtrx{[c]c^*_0\; & c^*_1\theta_0 & \cdots &c^*_m\theta_0^m\\
                              c^*_0\; & c^*_1\theta_1 &       & \vdots\\
                               \vdots    &                    &\ddots &\vdots\\
                              c^*_0\; &\cdots&\cdots &c^*_m\theta_m^m},
    \intertext{where $c^*_j:=(c_j)^{-1}=\frac{\Gamma(j+1+\alpha)}{\Gamma(j+1)}>0$, and}
      M_2:=D_1W &= \pmtrx{\theta_0^\alpha\\&\ddots\\&&\theta_m^\alpha}
                   \pmtrx{[c]1&\theta_0&\cdots&\theta_0^m\\
                              \vdots&\ddots &&\vdots\\
                              1&\theta_m&\cdots&\theta_m^m}
                            = \pmtrx{[c]\theta_0^\alpha & \theta_0^{\alpha+1} & \cdots & \theta_0^{\alpha+\new{m}}\\
                                     \theta_1^\alpha & \theta_1^{\alpha+1} &       & \vdots\\
                                     \vdots          &                     & \ddots& \vdots\\
                                     \theta_m^\alpha & \cdots               & \cdots & \theta_m^{\alpha+m}}.
    \end{align*}
    Now, to show that the characteristic polynomial
    \[
      \det(M_1-\lambda M_2)=\sum_{j=0}^{m+1} (-\lambda)^j a_j
    \]
    has no negative real root, it suffices to check that $a_j>0$ $\forall, j=0,\ldots, m+1$.
    These coefficients can be represented via determinants of certain $(m+1)\times (m+1)$ matrices, denoted
    by $M_{\mathcal I}$, where $\mathcal I$ is a subset of  $\{1,\ldots,m+1\}$, constructed column by column,
    using the corresponding $k$th column of $M_1$ or $M_2$:
    \[
      M_{\mathcal I}^{k} := \begin{cases}
                    M_1^k & \mbox{if}\;\;k\not\in {\mathcal I}\\
                    M_2^k & \mbox{if}\;\; k\in \mathcal I
                  \end{cases}
                  \qquad\mbox{for any}\;\;
                  {\mathcal I}\subseteq \{1,\ldots, m+1\},\quad k=1,\ldots, m+1.
    \]
    For example, for $a_0$ and $a_{m+1}$ we immediately have:
    \[
      a_{m+1} = \det M_2 = \det M_{\{1,\ldots,m+1\}}
      \quad\text{and}\quad
      a_0 = \det M_1 = \det M_{\emptyset}.
    \]
    For the other coefficients, note that if the $k$th column $A^k$ of some matrix $A$ allows a representation
    $A^k=B^k-\lambda C^k$ for some column vectors $B^k$ and $C^k$, then
    \begin{multline*}
       \det\Bigl(A^1 \cdots A^{k-1}A^kA^{k+1}\cdots A^{m+1}\Bigr)\\
       =
       \det\Bigl(A^1 \cdots A^{k-1}B^kA^{k+1}\cdots A^{m+1}\Bigr)\\
       -\lambda\,\det\Bigl(A^1 \cdots A^{k-1}C^kA^{k+1}\cdots A^{m+1}\Bigr),
    \end{multline*}
    which is easily checked using the $k$th column expansion.
    Using this property for all columns in $M_1-\lambda M_2$, one gets
    \begin{equation}\label{eq:poly2}
      \det(M_1-\lambda M_2)=
      \sum_{j=0}^{m+1} (-\lambda)^{j}\,\Bigl\{\sum_{\#{\mathcal I}=j}\! \det M_{\mathcal I}\Bigr\},
    \end{equation}
    where $\#{\mathcal I}$ denotes the number of elements in any index subset $\mathcal{I}$.

    Finally, we show that these remaining determinants are all non-negative. It holds for any $\mathcal{I}$
    \begin{align*}
      \det M_{\mathcal{I}}
        = \Bigl(\prod_{k\not\in {\mathcal I}} c^*_k \Bigr)\,
         \det\pmtrx{[c]\theta_0^{\beta_0} & \theta_0^{\beta_1} & \cdots &\theta_0^{\beta_{m}}\\
                       \theta_1^{\beta_0} & \theta_1^{\beta_1} & \cdots &\theta_1^{\beta_{m}}\\
                       \vdots             & \vdots             & \ddots & \vdots\\
                       \theta_m^{\beta_0} & \theta_m^{\beta_1} & \cdots &\theta_m^{\beta_m}}
                      ,
            \quad
            \beta_k:= \begin{cases}
                    k & \mbox{if}\;\;k+1\not\in {\mathcal I},\\
                    k+\alpha &\mbox{if}\;\; k+1\in \mathcal I.
                  \end{cases}
    \end{align*}
    We have $c^*_k>0$ for any $k$ and, therefore, the product in the above representation is positive.
    \new{The remaining determinant is the determinant of a so-called generalised Vandermonde matrix \cite{Heineman29},
    the positivity of which is established in \cite{RS00, YWZ01}
    under the condition that the sequences $\beta_j$ and $\theta_j$ are strictly increasing with $\theta_0>0$,
    while we, indeed, have}
    \[
      0\leq\beta_0<\beta_1<\dots<\beta_m
      \quad\text{and}\quad
      0<\theta_0<\theta_1<\dots<\theta_m\le 1.
    \]
    The desired assertion that $\det M_{\mathcal I}>0$ follows, which completes the proof.
  \end{proof}

  \begin{remark}[classical parabolic equations]
    In the limit case $\alpha=1$ all fractional operators become the classical ones. The proof of
    existence of our method still applies. But, now we only have $0\leq\beta_0\leq\beta_1\leq\dots\leq\beta_m$,
    and if there is one pair of equal powers, the corresponding determinant $\det M_{\mathcal I}$ is zero.
    Nevertheless, the sum $\sum_{\#{\mathcal I}=j}\! \det M_{\mathcal I}$ in \eqref{eq:poly2}
    for each $j\in\{1,\ldots, m+1\}$ includes exactly one positive determinant,
    which corresponds to ${\mathcal I}=\{m-j+2, \ldots, m+1\}$ (understood as ${\mathcal I}=\emptyset$ if $j=0$).
    Hence, each such sum is positive, so $a_j>0$ $\forall\, j\in\{0,\ldots,m+1\}$, so
    Theorem~\ref{thm:existCollDisc} remains valid for the classical case $\alpha=1$ with any $m\ge 0$
    and any set of collocation points $0\leq\theta_0 <\theta_1< \dots < \theta_m\leq 1$.
    Likewise, the analysis of the next Section~\ref{sec:continuous} (for the case $\theta_0=0$, which includes continuous collocation schemes)
    applies to the classical case $\alpha=1$.
    \new{Note also that for $\alpha=1$,
    the considered collocation methods \eqref{eq:coll_def} reduce
    to implicit Runge-Kutta schemes
    (in the case of Gauss-Legendre points, they are also equivalent to continuous Galerkin time-stepping schemes with quadrature), for which well-posedness results are available in the literature.
    }
  \end{remark}

\subsection{A semi-computational approach}
  In view of the role of the matrix  $M=W^{-1}D_1^{-1}W D_2^{-1}$ in Corollary~\ref{cor_M_matrix},
  one may also investigate the eigenvalues of $M$ for any given $m$ and $\{\theta_\ell\}$ numerically
  (following the numerical procedure in \cite[Section~4.2]{FrK24b}).
  While in Theorem~\ref{thm:existCollDisc} we have already established that $M$ never has negative real eigenvalues, such numerical investigation
  is still helpful, as it additionally shows how far the spectrum of $M$ is located from the negative real axis.

  To illustrate this approach, in Figures~\ref{fig:exist_eq} and \ref{fig:exist_gaussleg}
  \begin{figure}
    \begin{center}
      \includegraphics{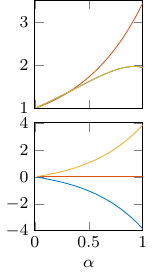}
      \includegraphics{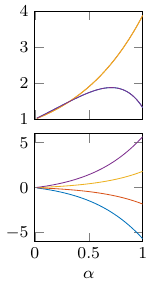}
      \includegraphics{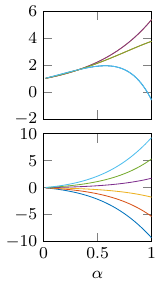}
      \includegraphics{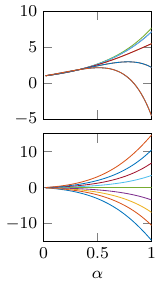}
    \end{center}\vspace{-0.6cm}
    \caption{Eigenvalues for collocation methods using equidistributed points for $m\in\{2,3,5,8\}$ (left to right) and real parts (top), imaginary parts (bottom)\label{fig:exist_eq}}
  \end{figure}
  \begin{figure}
    \begin{center}
      \includegraphics{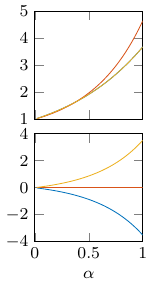}
      \includegraphics{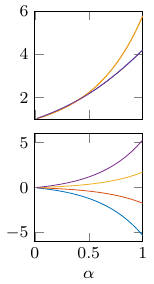}
      \includegraphics{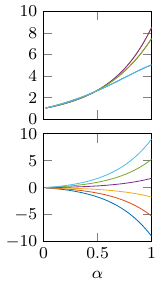}
      \includegraphics{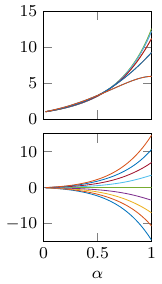}
    \end{center}\vspace{-0.6cm}
    \caption{Eigenvalues for collocation methods using \new{Gauss}-Legendre points for $m\in\{2,3,5,8\}$ (left to right) and real parts (top), imaginary parts (bottom)\label{fig:exist_gaussleg}}
  \end{figure}
  we plot the eigenvalues of $M$ for all $\alpha\in(0,1]$ and $m\in\{2,3,5,8\}$ for two sets of collocation points: $\theta_\ell=\frac{\ell+1}{m+2}$
  and the \new{Gauss}-Legendre points.
  We observe that in all cases there are no negative real eigenvalues.
  Furthermore, for the \new{Gauss}-Legendre family of points,
  both Fig.\,\ref{fig:exist_gaussleg}  and further numerical experiments, performed $\forall\,m\le20$,
  indicate that the real parts of all eigenvalues are positive. Not only these numerical results support Theorem~\ref{thm:existCollDisc},
  but they provide additional information about the spectrum of $M$.

\section{Well-posedness for collocation schemes with $\theta_0=0$}\label{sec:continuous}

  In this section we shall extend the results of the previous section to the case of $\theta_0=0$.
  This includes the case of $\theta_0=0$ and $\theta_m=1$, which---as discussed in \cite[Section 2.2.2]{Brunner04}---yields
  continuous collocation schemes, as well as the case of $\theta=0$ and $\theta_m<1$, which yields
  discontinuous collocation schemes.

  Recalling the definition \eqref{eq:coll_def} of collocation schemes under consideration,
  note that $u_\tau$ is always continuous in time, while $\pt_t^\alpha u_\tau$ and $\Pi u_\tau$ are in $\W_\tau$ as functions of time, i.e. may be discontinuous,
  so for such functions we shall use the notation of type $\Pi u_\tau(\cdot, t_k^-)$ and $\Pi u_\tau(\cdot, t_k^+)$.

  Now, suppose that $\theta_0=0$.
  Then we have $(\Pi v)(t_k^+)=v(t_k)$ for any continuous $v(t)$.
  So \eqref{eq:coll_def} yields
  $\pt_t^{\alpha}u_\tau(\cdot, t_{k}^+) + (\LL u_\tau- f)(\cdot, t_{k})=0$.
  If, additionally, $\theta_m=1$ (continuous collocation),
  then we additionally have $(\Pi v)(t_k^-)=v(t_k)$, and so $\pt_t^{\alpha}u_\tau(\cdot, t_{k}^-) + (\LL u_\tau- f)(\cdot, t_{k})=0$,
  i.e. we effectively use \new{the space} $\W_\tau^{cont}:=\W_\tau\cap C([0,T])$.

  In terms of well-posedness, whether $\theta_m=1$ or $\theta_m<1$, let us investigate \eqref{eq:coll_def}, or its equivalent formulation
  \eqref{eq:coll_def_w_col},  on $(t_{k-1},t_k]$.
  Assuming that $u_\tau(\cdot, t)$ is already computed for $t\le t_{k-1}$,
  note that $w_\tau(\cdot, t_{k-1}^+) = \pt_t^{\alpha}u_\tau(\cdot, t_{k-1}^+)$ is also known (as discussed above), as well as $(\Pi u)(\cdot,t_{k-1}^+)=u(\cdot,t_{k-1})$.
  Thus, we need to solve \eqref{eq:coll_def_w_col} 
  for $t_k^\ell$ where $\ell=1,\ldots, m$, i.e.,
  in contrast with the case $\theta_0>0$, we now have a system of $m$ equations on each interval $(t_{k-1},t_k)$.
  The latter system can be again rescaled to the form
  \eqref{eq:problemRescaled}, only now $U(\cdot, 0)$ is known, so \eqref{eq:problemRescaled}  is restricted to  $\ell\in\{1,\ldots, m\}$
  with unknowns $\{U(\cdot, \theta_{\ell}),\,\ell\ge 1\}$.

  The evaluations in Section~\ref{ssec_matrix} mostly apply to this case,
  only now \new{we have} $v_1(x)=U(x,0)$ in \eqref{matr1},
  the first row in $W$
  in \eqref{matr2} becomes $[1,0,\cdots, 0]$, and the first element in $D_1$ in \eqref{matr3} becomes $\theta_0^\alpha=0$.
  Thus, while \eqref{VV_version} is true, the first element $v_1(x)$ of $\vec V(x)$ is known and should be eliminated, which leads to the following version
  of~\eqref{VV_version}
  for $\hat{V}(x):=\big(v_1(x),\cdots,v_m(x)\big)^{\!\!\top}$, with an appropriate right-hand side $\hat{F}(x)$:
  \begin{equation}\label{VV_version_new}
    \Bigl(\widetilde W +\tau^\alpha \LL \,\hat D_1\widetilde W  \hat D_2\Bigr)\,\hat{V}(x)=\hat{F}(x).
  \end{equation}
  Here the following matrices in $\R^{m,m}$ are used (compare with \eqref{matr2} and \eqref{matr3}):
  \[
    \widetilde W:=
         \pmtrx{
                 \theta_1 & \theta_1^2 &\cdots &\theta_1^m\\
                \vdots\ &\vdots\ && \vdots\ \\
                 \theta_m& \theta_m^2 &\cdots & \theta_m^m}=
                 \hat D_3\pmtrx{
                 1&\theta_1 & \cdots &\theta_1^{m-1}\\
                \vdots\ &\vdots\ && \vdots\ \\
                 1&\theta_m &\cdots & \theta_m^{m-1}}=:\hat D_3\hat  W,
  \]
  \[
      \hat D_1:={\rm diag}(\theta_1^{\alpha},\cdots,\theta_m^{\alpha}), \;\; \hat D_2:={\rm diag}(c_1,\cdots,c_m),\;\;
         \hat D_3:={\rm diag}(\theta_1,\cdots,\theta_m).
  \]
  \new{While the diagonal matrix $\hat D_3$ does not appear in \eqref{VV_version_new}, it will be used to reduce the latter to a system of type \eqref{VV_version}.
  Indeed,} it remains to multiply \eqref{VV_version_new} by $\hat D_3^{-1}$,
  using
  $\hat D_3^{-1}\hat D_1\widetilde W=\hat D_1\hat D_3^{-1}\widetilde W= \hat D_1\hat W$,
  which yields
  \[
    \Bigl(\hat W +\tau^\alpha \LL \,\hat D_1\hat W \hat D_2\Bigr)\,\hat{V}(x)=\hat D_3^{-1}\hat{F}(x).
  \]

  This system is exactly of type \eqref{VV_version}, only it involves $\R^{m,m}$ matrices and
  $0<\theta_1<\theta_m\le 1$ (rather than $\R^{m+1,m+1}$ matrices and $0<\theta_0<\theta_m\le 1$ in \eqref{VV_version}), so it shares
  \textit{all well-posedness properties} established in
  Section~\ref{sec:wellposed}, including those of Section~\ref{ssec_eigen} with $M:=\hat W^{-1}\hat D_1^{-1}\hat W\hat D_2^{-1}$.
  We also note that a version of  Theorem~\ref{the_LaxM} (with $D=\mathrm{diag}(\theta_2,\theta_1)$) applies to the case
  \[
    m=2,\qquad \theta_0=0,\qquad 0<\frac{\theta_1}{\theta_2}\leq \frac{1}{1+\alpha},
  \]
  which is similar to the case \eqref{m1_lax} considered in Section~\ref{ssec_Lax}.

  In Figure~\ref{fig:exist_gausslob}
  \begin{figure}
    \begin{center}
      \includegraphics{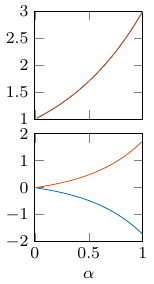}
      \includegraphics{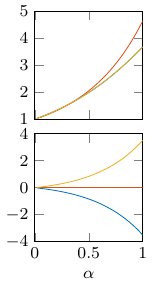}
      \includegraphics{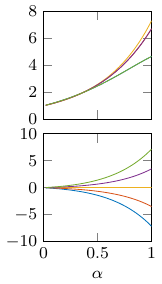}
      \includegraphics{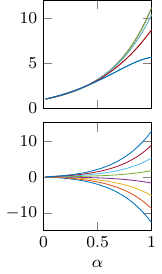}
    \end{center}\vspace{-0.6cm}
    \caption{Eigenvalues for collocation methods using \new{Gauss}-Lobatto points for $m\in\{2,3,5,8\}$ (left to right) and real parts (top), imaginary parts (bottom)\label{fig:exist_gausslob}}
  \end{figure}
  we plot the eigenvalues of $M=\hat W^{-1}\hat D_1^{-1}\hat W\hat D_2^{-1}$ \new{computed} for all $\alpha\in(0,1]$ and $m\in\{2,3,5,8\}$ for the \new{Gauss}-Lobatto collocation points
  (which include $\theta_0=0$ and $\theta_m=1$, i.e. yield continuous collocation schemes).
  We observe that in all cases there are no negative real eigenvalues, and, in fact, the real parts of all eigenvalues are positive.
  These numerical results, clearly, support a version of Theorem~\ref{thm:existCollDisc} for the case $\theta_0=0$.

\section{Residual-type a-posteriori error estimation and adaptive time stepping}\label{sec:res}
  The well-known a-priori error analyses for the popular L1 method show that
  optimal convergence rates (in the context of graded temporal meshes) require strongly-refined temporal meshes at initial time
  \cite{SORG17,Liao_etal_sinum2018,KopMC19}, while even stronger mesh refinements are required for quadratic-interpolation-based methods \cite{Kopteva_Meng,Kopteva2021}.
  Note also that any a-priori error analysis for higher-order
  interpolation-based methods---such as collocation methods considered in this paper---be\-comes very problematic.

  An alternative approach, which we now explore, is to employ adaptive time stepping algorithms based on rigourous a-posteriori error bounds
  \cite{Kopteva22,Kopt_Stynes_apost22,FrK23}.
  \new{This yields temporal grids that are not only} appropriately adapted for initial solution singularities, but also for certain discontinuities in the
  right-hand sides \cite{FrK25}.
  This approach is based on residual-type a-posteriori error estimates, i.e. certain pointwise-in-time errors of the computed solutions, measured in the spatial $L_2(\Omega)$
  or $L_\infty(\Omega)$ norms, are bounded using the \new{standard} residual $\RS := \partial_t^\alpha u_\tau + \LL u_\tau -f$ of the computed solution $u_\tau$
  for any semi-discretisation of \eqref{eq:problem} in time, including the collocation schemes~\eqref{eq:coll_def}.

  Note also recent \new{residual-type} a-posteriori error estimates for subdiffusion equations in \cite{BanjaiMakridakis2022},
 \new{which were obtained in the
  $L_2(\Omega)$ and $L_2((0,t);\,H^1_0(\Omega))$ norms for any $t>0$,
  but tested only on a-priori chosen meshes. In comparison to our approach, the error estimation in the $L_2(\Omega)$ norm in \cite{BanjaiMakridakis2022}
relies on Laplace transforms and resolvent bounds for the spatial operator.}

  We start by summarising a few a-posteriori error bounds from  \cite{Kopteva22,Kopt_Stynes_apost22,FrK23}
  in the form of the following two theorems.
  \new{The notation $(\pt_t^\alpha+\lambda)^{-1} v(t)$, for any appropriate $v$, is used for the solution $w$ of $(\pt_t^\alpha+\lambda)w=v$ $\forall\,t>0$ subject to $w(0)=0$.}
  Importantly, when estimating the errors in the spatial norm $L_\infty(\Omega)$
  we now weaken the regularity assumptions  on $u_\tau$ and $u$ (compared to our earlier results, where they were assumed to be in $C^2(\Omega)$ for each $t>0$).

  \begin{theorem}[a-posteriori error estimate]\label{the1}
    Suppose that $p=2$ and $\LL$ in \eqref{eq:problem} satisfies $\langle \LL v,v\rangle\ge \lambda\|v\|_{L_2(\Omega)}^2$ $\forall\,v\in H_0^1(\Omega)$,
    or $p=\infty$ and $\LL[1]=c(x)\ge \lambda$ for some $\lambda\in\R $.
    Additionally, suppose that a unique solution $u$ of \eqref{eq:problem} and its approximation $u_\tau$ are
    in $C([0,T];\,L_p(\Omega)) \cap W^1_\infty(\epsilon,t;\,L_p(\Omega))$ for any $0<\epsilon<t\le T$,
    and also in $H^1_0(\Omega)$ for any $t>0$, while $u_\tau(\cdot,0)= u_0$.
    Then the error of the latter is bounded in terms of its residual $\RS(\cdot,t) =( \partial_t^\alpha u_\tau + \LL u_\tau -f)(\cdot,t)$
    as follows:
    \begin{align}    \label{L2_error}
      \|(u_\tau-u)(\cdot,t)\|_{L_p(\Omega)}&\le (\pt_t^\alpha+\lambda)^{-1}\|\RS(\cdot, t)\|_{L_p(\Omega)}\qquad\forall\, t>0.
    \end{align}
  \end{theorem}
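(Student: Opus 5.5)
The plan is to work with the error $e:=u_\tau-u$ and derive a scalar fractional differential inequality for its spatial norm. Subtracting \eqref{eq:problem} from the definition of the residual gives
\[
\pt_t^\alpha e+\LL e=\RS \quad\mbox{in }\Omega\times(0,T],\qquad e(\cdot,0)=0,\qquad e=0\ \mbox{on }\pt\Omega .
\]
Set $E(t):=\|e(\cdot,t)\|_{L_p(\Omega)}$. The target is the inequality $\pt_t^\alpha E+\lambda E\le \|\RS(\cdot,t)\|_{L_p(\Omega)}$, at least in a weak or integrated sense. Once this holds, a comparison principle for $\pt_t^\alpha+\lambda$ with zero initial data gives $E\le(\pt_t^\alpha+\lambda)^{-1}\|\RS\|_{L_p(\Omega)}$. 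This comparison principle follows from positivity of the Mittag-Leffler solution kernel, as in \cite{Kopteva22,Kopt_Stynes_apost22}, and it yields \eqref{L2_error}.

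\textbf{Case $p=2$.} Test the error equation with $e(\cdot,t)$.

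Step 1. Use the pointwise-in-time inequality $\langle \pt_t^\alpha e,e\rangle\ge \|e\|\,\pt_t^\alpha\|e\|$. This is the Alikhanov-type bound, which follows from $\pt_t^\alpha\tfrac12\|e\|^2\le\langle\pt_t^\alpha e,e\rangle$ and its refinement. It can be proved by writing $\pt_t^\alpha$ in its integrated-by-parts (Marchaud) form, which is legitimate given the assumed $W^1_\infty(\epsilon,t)$ regularity away from $t=0$, continuity at $0$, and the monotone kernel.

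Step 2. Coercivity gives $\langle\LL e,e\rangle\ge\lambda\|e\|^2$, and Cauchy--Schwarz gives $\langle \RS,e\rangle\le\|\RS\|\,\|e\|$.

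Step 3. Divide by $\|e\|$ where it is positive, and handle the zero set by a regularisation $\sqrt{\|e\|^2+\delta^2}$ with $\delta\to0$.

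\textbf{Case $p=\infty$.} Use a maximum-principle barrier instead.

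Step 1. Let $B(t):=(\pt_t^\alpha+\lambda)^{-1}\|\RS\|_{L_\infty(\Omega)}\ge0$, and consider $\pm e-B$.

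Step 2. The function $z:=\pm e-B$ satisfies $\pt_t^\alpha z+\LL z=\pm\RS-\|\RS\|_\infty-(c-\lambda)B\le0$, using $\LL[B]=cB\ge\lambda B$, with $z(0)=0$ and $z\le0$ on $\pt\Omega$.

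Step 3. Since $u,u_\tau$ are no longer assumed $C^2$, argue weakly rather than at a pointwise maximum. Test with $z^+\in H^1_0(\Omega)$ and use the same convexity-type inequality $\langle\pt_t^\alpha z,z^+\rangle\ge\tfrac12\pt_t^\alpha\|z^+\|^2$. When the $b_k$ terms prevent coercivity, first shift by an exponential-in-time factor, or reduce to a Stampacchia-type weak maximum principle. The aim is the estimate $\pt_t^\alpha\|z^+\|^2\le C\|z^+\|^2$ with zero initial data, which gives $z^+\equiv0$.

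The main obstacle is justifying the fractional chain-rule inequalities (Step 1 in each case) under the weakened regularity, namely only $W^1_\infty$ on $(\epsilon,t)$ and continuity at $0$. I would handle this by proving them on $[\epsilon,t]$ with the history term on $(0,\epsilon)$ controlled by continuity, and then letting $\epsilon\to0$.
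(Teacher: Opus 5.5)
Your proposal is correct and takes essentially the same route as the arguments the paper invokes by citation. For $p=2$ that is testing the error equation with $e$, using $\|e\|\,\pt_t^\alpha\|e\|\le\langle\pt_t^\alpha e,e\rangle$, and applying a comparison principle for $\pt_t^\alpha+\lambda$; for $p=\infty$ it is the barrier $(\pt_t^\alpha+\lambda)^{-1}\|\RS\|_{L_\infty(\Omega)}$ combined with a weak maximum principle for $H^1_0(\Omega)$ solutions, which is exactly what lets the paper drop its earlier $C^2$ assumption.

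One small caution: an exponential-in-time shift does not interact cleanly with $\pt_t^\alpha$. Close your $p=\infty$ argument instead with the G\aa rding bound $a(z^+,z^+)\ge -C\|z^+\|_{L_2(\Omega)}^2$ and positivity of the kernel of $(\pt_t^\alpha-\mu)^{-1}$ for $\mu>0$, which gives your stated aim $z^+\equiv 0$.
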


  \begin{proof}
    For $p=2$ this result is given in \cite[Theorem~2.2]{Kopteva22}; see also \cite[Theorem~2.1]{FrK23}. For $p=\infty$ it is given in \cite[Theorem~3.2]{Kopteva22}, and also in \cite[Theorem~2.2]{FrK23}, however, under stronger regularity assumptions on $u_\tau$ and $u$,
    required for the application of the weak maximum principle for the operator $\pt_t^\alpha+\LL$ to a strong solution $u_\tau-u$.
    Now we weaken these assumptions since
    a similar weak maximum principle is also valid for weak solutions in $H^1_0(\Omega)$ \cite[Theorem 3.1]{Kopteva2022}.
  \end{proof}

  \begin{theorem}[a-priori residual barriers]\label{the_g}
    For some $\lambda\in\R $,
    suppose that \mbox{$p=2$} and $\LL$ in \eqref{eq:problem} satisfies $\langle \LL v,v\rangle\ge \lambda\|v\|^2$ $\forall\,v\in H_0^1(\Omega)$,
    or $p=\infty$ and $\LL g\ge \lambda$ subject to $1\le g\le 1+\omega$ in $\Omega$
    for some $g\in H^1(\Omega)$ and $\omega\in \R$.
    Additionally, let any non-negative barrier function $\EE \in W^1_\infty(\epsilon,t)$ for any \mbox{$0<\epsilon<t\le T$},
    subject to  $0\le \lim_{t\to 0^+}\EE(t)<\infty$  and $\omega\, \pt_t^\alpha\EE(t)\ge 0$  $\forall\,t>0$,
    where  $\omega:=0$ if $p=2$.
    Then, under the assumptions on $u_\tau$ and $u$ made in Theorem~\ref{the1}, for the error $e:=u_\tau-u$, one has
    \begin{equation}\label{R_h_bound_new}
      \|\RS(\cdot,t)\|_{L_p(\Omega)}\leq
        \frac{(\new{\partial_t}^\alpha+\lambda)\EE(t)}{1+\omega}\quad\forall\,t>0
      \quad\Rightarrow\quad
      \|e(\cdot,t)\|_{L_\new{p}}\le \EE(t)\quad\forall\,t>0.
    \end{equation}
    Here $\omega:=0$ and $\displaystyle\lambda:=\min_{\Omega}c(\cdot)$ is one admissible pair for $p=\infty$, while $\omega:=0$ is always used for $p=2$.
    Furthermore,
    \begin{subequations}\label{barrier_bounds_alg_new}
      \begin{align}
        &
        \|\RS(\cdot,t)\|_{L_p(\Omega)}\leq \frac{TOL\cdot\RR_0(t)}{1+\omega}\;\;\;\forall\,t>0
        &\Rightarrow\quad&
        \|e(\cdot,t)\|_{L_p(\Omega)}\leq TOL,
        \label{barrier_bounds_alg_new_a}
        \\[0.3cm]
        &
        \|\RS(\cdot,t)\|_{L_p(\Omega)}\leq \frac{TOL\cdot\RR_1(t)}{1+\omega}\;\;\;\forall\,t>0
        &\Rightarrow\quad&
        \|e(\cdot,t)\|_{L_p(\Omega)}\leq TOL\cdot  t^{\alpha-1},
        \label{barrier_bounds_alg_new_b}
      \end{align}
    \end{subequations}
    where
    \[
      \RR_0(t)=\frac{t^{-\alpha}}{\Gamma(1-\alpha)}+\lambda,
      \,\,\,
      \RR_1(t)=\frac{t^{-1}\bigl[t^{1-\alpha}-((t-\tau)^+)^{1-\alpha}\bigr]}{\Gamma(1-\alpha)\, \tau^{1-\alpha}}+\lambda\,\max\{\tau, t\}^{\alpha-1},
    \]
    with an arbitrary parameter $0<\tau\le t_1$ and the 
    notation \mbox{$(\cdot)^+:=\max\{\cdot,\,0\}$}.
  \end{theorem}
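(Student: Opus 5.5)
The plan is to derive everything from Theorem~\ref{the1}, applied with a barrier-based comparison argument. The key monotonicity fact behind Theorem~\ref{the1} is that $(\pt_t^\alpha+\lambda)^{-1}$ is order preserving on non-negative data: if $0\le \phi(t)\le\psi(t)$ for all $t>0$, then $(\pt_t^\alpha+\lambda)^{-1}\phi\le (\pt_t^\alpha+\lambda)^{-1}\psi$. This is the scalar comparison principle for fractional ODEs with zero initial data, valid for any real $\lambda$ (for negative $\lambda$ it follows, e.g., from positivity of the Mittag-Leffler resolvent kernel).

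\emph{Step 1: the implication \eqref{R_h_bound_new}.} Set $\psi:=(\pt_t^\alpha+\lambda)\EE$. Then $w:=\EE-\EE(0^+)$ solves
\[
(\pt_t^\alpha+\lambda)w=\psi-\lambda\EE(0^+), \qquad w(0)=0,
\]
so $\EE$ coincides with the solution of $(\pt_t^\alpha+\lambda)\EE=\psi$ with non-negative initial value $\EE(0^+)$. Since the error has $e(0)=0$, comparison gives $(\pt_t^\alpha+\lambda)^{-1}\psi\le\EE$.

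For $p=2$ (with $\omega=0$), Theorem~\ref{the1} gives $\|e(\cdot,t)\|\le(\pt_t^\alpha+\lambda)^{-1}\|\RS\|$, and monotonicity then yields $\|e\|\le\EE$.

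For $p=\infty$, Theorem~\ref{the1} is not quite enough, because we only have $\LL g\ge\lambda$ rather than $c\ge\lambda$. Instead I would build the spatial barrier $B(x,t):=\EE(t)g(x)/(1+\omega)$ and compute
\[
(\pt_t^\alpha+\LL)B=\frac{g\,\pt_t^\alpha\EE+\EE\,\LL g}{1+\omega}.
\]
To bound this from below by $\psi/(1+\omega)$, use $\EE\ge0$ and $\LL g\ge\lambda$. For the first term, use $\omega\pt_t^\alpha\EE\ge0$ together with $1\le g\le1+\omega$, which gives $g\,\pt_t^\alpha\EE\ge\pt_t^\alpha\EE$ when $\pt_t^\alpha\EE\ge 0$ (and $\omega=0$ forces $g=1$ otherwise). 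Hence $(\pt_t^\alpha+\LL)(B\mp e)\ge\psi/(1+\omega)\mp\RS\ge0$. The weak maximum principle for weak $H^1_0$ solutions \cite[Theorem 3.1]{Kopteva2022} (the same tool used in the proof of Theorem~\ref{the1}) then gives $|e|\le B\le\EE$, using $g/(1+\omega)\le1$. The admissible pair $\omega=0$, $\lambda=\min c$ corresponds to $g\equiv1$.

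\emph{Step 2: the bounds \eqref{barrier_bounds_alg_new}.} These are obtained by choosing particular barriers in \eqref{R_h_bound_new}.
\begin{itemize}
\item For \eqref{barrier_bounds_alg_new_a}, take $\EE\equiv TOL$. Then $\pt_t^\alpha\EE=0$, so the hypothesis $\omega\pt_t^\alpha\EE\ge0$ holds, but $(\pt_t^\alpha+\lambda)\EE=\lambda\,TOL$ does not reproduce $\RR_0$. Instead I would take $\EE(t)=TOL\cdot\min\{1,\cdot\}$, or more simply use $(\pt_t^\alpha+\lambda)(TOL\cdot t^0)$ interpreted with $u$-data. The cleanest route is to notice that $\pt_t^\alpha$ of the Heaviside-type function $TOL\cdot\mathbf 1_{t>0}$, viewed through $J_t^{1-\alpha}$ of a delta at $0$, equals $TOL\,t^{-\alpha}/\Gamma(1-\alpha)$. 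Equivalently, $\EE\equiv TOL$ with $\EE(0^+)=TOL$ but zero value at $t=0$ for the comparison, so that Step 1's initial-value term supplies exactly $t^{-\alpha}/\Gamma(1-\alpha)$. I would redo Step 1 with $w=\EE$ (initial value $0$ and a jump), checking that $\pt_t^\alpha$ is understood via $J^{1-\alpha}$ of the distributional derivative.
\item For \eqref{barrier_bounds_alg_new_b}, take
\[
\EE(t)=TOL\cdot\max\{\tau,t\}^{\alpha-1},
\]
or rather the function whose Caputo derivative is computed by the explicit formula. Writing $\EE=TOL\,\tau^{\alpha-1}$ plus a correction, one gets
\[
\pt_t^\alpha\,\frac{\min\{t,\tau\}}{\tau}=\frac{t^{1-\alpha}-((t-\tau)^+)^{1-\alpha}}{\Gamma(2-\alpha)\tau}.
\]
I would verify, by direct integration, that $(\pt_t^\alpha+\lambda)\EE\ge TOL\,\RR_1$, with the inequality going the right way (the factor $t^{-1}$ versus $(1-\alpha)^{-1}$ must be handled carefully).
\end{itemize}

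The main obstacle is this last computation: matching $\RR_1$ exactly, and confirming that the required sign conditions (including $\omega\pt_t^\alpha\EE\ge0$, which fails for decreasing $\EE$ unless $\omega=0$) hold for the chosen barrier.
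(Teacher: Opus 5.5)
Your architecture matches the paper's proof, which is given by citing earlier work: \eqref{R_h_bound_new} follows from Theorem~\ref{the1} plus positivity of $(\pt_t^\alpha+\lambda)^{-1}$ for $p=2$, and from the barrier $g\EE/(1+\omega)$ with the weak maximum principle for $p=\infty$. Then \eqref{barrier_bounds_alg_new} follows from particular barriers. Two things are missing.

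\textbf{The convention for $\EE(0)$.} \eqref{barrier_bounds_alg_new} only follows if $\EE(0):=0$, so that $\pt_t^\alpha\EE=\EE(0^+)\,t^{-\alpha}/\Gamma(1-\alpha)+J_t^{1-\alpha}(\pt_t\EE)$ on $(0,T]$. This is exactly what turns $\EE\equiv TOL$ into $(\pt_t^\alpha+\lambda)\EE=TOL\cdot\RR_0$. You carry out Step~1 with $\EE(0)=\EE(0^+)$, which gives a weaker implication, and you only announce that you would redo it. For $p=2$ the redo is immediate: $\EE$ itself now equals $(\pt_t^\alpha+\lambda)^{-1}\psi$, so no initial-value comparison is needed. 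For $p=\infty$, however, your $B=\EE g/(1+\omega)$ then jumps at $t=0$. The weak maximum principle, as used in Theorem~\ref{the1} for functions in $C([0,T];L_p(\Omega))$, therefore cannot be invoked verbatim. The jump must be accommodated, e.g.\ through the integral formulation $v+J_t^\alpha\LL v=J_t^\alpha[(\pt_t^\alpha+\LL)v]$, valid when $v(0)=0$.

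\textbf{The bound \eqref{barrier_bounds_alg_new_b} is not proved.} You name the right barrier, $\EE=TOL\cdot\max\{\tau,t\}^{\alpha-1}$, but leave the computation as the ``main obstacle''. You even suspect that $\omega\,\pt_t^\alpha\EE\ge0$ fails. In the correct convention the computation is exact. For $t\le\tau$ only the jump term $TOL\,\tau^{\alpha-1}t^{-\alpha}/\Gamma(1-\alpha)$ is present. For $t>\tau$,
\[
\frac{\Gamma(1-\alpha)}{TOL}\,\pt_t^\alpha\EE(t)=\tau^{\alpha-1}t^{-\alpha}-(1-\alpha)\int_\tau^t(t-s)^{-\alpha}s^{\alpha-2}\,\ds
=\tau^{\alpha-1}\,t^{-1}\bigl[t^{1-\alpha}-(t-\tau)^{1-\alpha}\bigr],
\]
because $\pt_s\bigl[(t-s)^{1-\alpha}s^{\alpha-1}\bigr]=-(1-\alpha)\,t\,(t-s)^{-\alpha}s^{\alpha-2}$. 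Consequently:
\begin{itemize}
\item $(\pt_t^\alpha+\lambda)\EE=TOL\cdot\RR_1$ holds with equality.
\item $\pt_t^\alpha\EE\ge0$, because the jump term dominates the decrease, so the sign condition holds for every $\omega\ge0$.
\item $\lim_{t\to0^+}\EE=TOL\,\tau^{\alpha-1}<\infty$ and $\EE\le TOL\cdot t^{\alpha-1}$.
\end{itemize}
Hence \eqref{R_h_bound_new} yields \eqref{barrier_bounds_alg_new_b}.

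The ramp identity you quote is correct, but it is not the derivative of this barrier and is not needed. Likewise, in \eqref{barrier_bounds_alg_new_a} drop the detours via $\min\{1,\cdot\}$ and ``$u$-data''. Simply take $\EE\equiv TOL$ with $\EE(0)=0$, for which $\pt_t^\alpha\EE=TOL\,t^{-\alpha}/\Gamma(1-\alpha)\ge0$.
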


  \begin{proof}
    The bound~\eqref{R_h_bound_new} for $p=2$ is given in \cite[Corollary~2.3]{Kopteva22};
    see also \cite[Corollary~2.4]{FrK23}, as well as for $p=\infty$ and $\omega=1$ (which corresponds to $g:=1$).
    In the more general form, involving $\omega\ge 1$, this bound for $p=\infty$ can be found in \cite[Lemma~2.5]{FrK23}.
    The bounds~\eqref{barrier_bounds_alg_new} follow from \eqref{R_h_bound_new}, as shown in \cite{Kopteva22,FrK23}.
    In comparison to these earlier results,  we now weaken the regularity assumptions on $u_\tau$ and $u$ along the lines of Theorem~\ref{the1}.
  \end{proof}

\subsection{Residuals of the computed solution for collocation schemes \eqref{eq:coll_def}}

  The a-posteriori error bounds \eqref{barrier_bounds_alg_new} very naturally lead to time stepping algorithms with a local time stepping criterion.
  Indeed, once we choose a desired pointwise-in-time error profile---such as $TOL$ in
  \eqref{barrier_bounds_alg_new_a} or $TOL\cdot t^{\alpha-1}$ in
  \eqref{barrier_bounds_alg_new_b}---it suffices to choose each time step $\tau_k$ adaptively such
  that $\|\RS(\cdot,t)\|_{L_p(\Omega)}$ does not exceed the corresponding residual barrier from
  \eqref{barrier_bounds_alg_new} $\forall\,t\in(t_k-1,t_k]$; see \cite[Algorithm 1]{FrK23} for details.

  Thus, an application of such a time stepping algorithm to any particular method hinges on the computation of its residuals.
  For any collocation scheme of type \eqref{eq:coll_def}, whether it is continuous or discontinuous, the residual is given by
  \begin{align}
    \RS &= \partial_t^\alpha u_\tau + \LL u_\tau -f\notag\\
      &= w_\tau +\LL J_t^\alpha w_\tau -f + \LL u_0,\label{eq:residual}
  \end{align}
  where we used $w_\tau$ from \eqref{eq:coll_def_w}. In view of the latter, one immediately concludes that
  \begin{equation}\label{res_zero}
    \RS=0\quad \mbox{in~}\Omega\times\{t_k^\ell\}.
  \end{equation}
  Hence, when applying \eqref{barrier_bounds_alg_new} in adaptive time stepping algorithms,
  it is important to compute $\|\RS(\cdot, t)\|_{L_p(\Omega)}$ at a number of sampling points $t\not\in\{t_k^\ell\}$ on each time interval.
  \new{In all our experiments, 20 sampling points per interval were used for all
  $m\leq 8$
  (while our experiments also show that 10-20 sampling points per time interval suffice for all $m\le 8$).
  More generally, 
  as discussed in \cite[Section~4]{FrK23} (see a separate subsection on {\it Sampling points}),
  the number of sampling points per time interval should be taken larger for higher-order discretizations,
  and it is also helpful to use a graded distribution of these points on each time interval (which reflects a non-symmetric bubble shape of $\|\RS(\cdot, t)\|_{L_p(\Omega)}$).}

  Note also that \eqref{res_zero} implies that the interpolant $\Pi\RS=0$ so \new{we have} $\RS=(I-\Pi)\RS$.
  The latter observation was also valid for another class of collocation schemes considered in
  \cite{FrK23} (in which $u_\tau$ was piecewise polynomial in time, rather than $\pt_t^\alpha u_\tau$)
  and it, in fact, helped to simplify the computation of residuals.
  For the collocation schemes that we consider in this paper, using \eqref{eq:residual}, in which
  $(I-\Pi)w_\tau=0$, one gets
  \begin{align}\notag
    \RS=(I-\Pi)\RS
      &= (I-\Pi)(\LL J_t^\alpha w_\tau -f)\\
      &= (I-\Pi)(\LL u_\tau-f),
      \label{residual_coll}
  \end{align}
  which does not seem easier to compute compared with the original form \eqref{eq:residual}.
  Thus, \new{\eqref{eq:residual}} will be used in all our numerical experiments below.

  \begin{remark}[non-smooth initial condition]\label{rem_u0_nonsmth}
    One apparent difference between formulations \eqref{eq:coll_def1} and \eqref{eq:coll_def_w}
    (or, equivalently,  between \eqref{eq:coll_def1_col} and \eqref{eq:coll_def_w_col}) is in
    that the latter involves $\LL u_0$, and, thus, requires a smoother initial condition $u_0$.
    It should also be noted that while \eqref{eq:coll_def1} yields computed solutions
    $u_\tau(\cdot, t_k^\ell)$ at $\{t_k^\ell\}$, which are solutions of elliptic differential
    equations with the operator $\LL$ (as becomes obvious from rewriting \eqref{eq:coll_def1_col}
    in the form \eqref{VV_version}), the smoothness of $u_\tau(\cdot, t)$ for $t\neq t_k^\ell$ is no better than the smoothness of $u_0$.
    In the simplest case  of $m=0$ and $\theta_0=1$
    this is evident from the representation of
    \[
      u_\tau(\cdot,t)=\sum_{k=0}^M u_\tau(\cdot, t_k)\,\phi_k(t)
    \]
    as a linear combination of the basis functions $\phi_k(t)$, see \cite[Section 3.3 and Fig.\,1]{LiSal23},
    in view of $\phi_0(t)\in C[0,T]$ being strictly positive on each interval $(t_{k-1},t_k)$ $\forall\,k \ge 1$
    (while, as expected, $\phi_0$ vanishes $\forall\,t\in\{t_k\}_{k>0}$).
    Thus any non-smoothness in $u_0$ propagates to the computed solution for
    $t\in(0,T]\backslash\{t_k^\ell\}$---as well as to its residual, which includes $\LL u_\tau$---unless the collocation scheme is modified
    as, for example, described in Remark~\ref{rem_u0_fix}.
  \end{remark}

  \begin{remark}[modified collocation scheme for non-smooth initial condition]\label{rem_u0_fix}
    As explained in Remark~\ref{rem_u0_nonsmth}, any non-smoothness in $u_0$ propagates to the computed solution for $t\in(0,T]\backslash\{t_k^\ell\}$. If undesirable, this
    propagation may be avoided, if
    the considered collocation scheme is modified on the initial time interval $(0,t_1)$  by
    a time stepping scheme based on a piecewise polynomial approximation.
    For example, one can employ the popular L1 scheme or any scheme from \cite{FrK23,FrK24b}, or even the following L0 scheme, which assumes that
    $u_\tau(\cdot, t)=u_\tau(\cdot, t_1)$ for $t\in(0,t_1]$ such that $\pt_t^\alpha u_\tau(\cdot, t)=\Gamma(1-\alpha)^{-1}[u_\tau(\cdot, t_1)-\new{u_0(\cdot)}]t^{-\alpha}$
    for $t\in(0,t_1]$. It follows that
    \[
      \bigl[\Gamma(1-\alpha)^{-1}t_1^{-\alpha}+\LL \bigr] \,u_\tau(\cdot, t_1)=\Gamma(1-\alpha)^{-1} \new{u_0(\cdot)} \,t_1^{-\alpha}+f(\cdot, t_1).
    \]
    Once $u_\tau(\cdot, t)$ is piecewise-polynomial on $(0,t_1]$, this ensures that $u_\tau(\cdot, t_1)$ is a solution of an elliptic equation,
    while, starting from the second time interval, one employs a version of \eqref{eq:coll_def}:
    \[
      \pt_t^{\alpha}u_\tau+ \Pi (\LL u_\tau- f)=0\qquad \mbox{in~}\Omega\times(t_1,T].
    \]
    \new{Note that since $\pt_t^{\alpha}u_\tau$ remains piecewise-polynomial for $t>t_1$, the evaluation of the residual remains unchanged for $t>t_1$,
    i.e. \eqref{residual_coll} remains valid for $t>t_1$. For the first time interval, the evaluation of the residual will depend on the discretization used on this interval.
    For example, if the L1 method is used, for $t\le t_1$ the residual becomes
    \[
      \RS(\cdot,t)=[u_\tau(\cdot,t_1)-u_0(\cdot)]\{\Gamma(2-\alpha)\}^{-1}\tau_1^{-1}t^{1-\alpha}+\LL u_\tau(\cdot,t)-f(\cdot,t)
    \]
    (as elaborated in \cite[Section~3.1]{FrK23}), while $u_\tau(\cdot,t_1)$ is obtained from $\RS(\cdot,t_1)=0$.
    If the L0 scheme is used  (see \cite[Section~4.2]{Kopteva22}),
    then the residual changes to
    \[
      \RS(\cdot,t)=[u_\tau(\cdot,t_1)-u_0(\cdot)]\{\Gamma(1-\alpha)\}^{-1}t^{-\alpha}+\LL u_\tau(\cdot,t)-f(\cdot,t)
    \]
    for $t\le t_1$, while $u_\tau(\cdot,t_1)$ is again obtained from $\RS(\cdot,t_1)=0$.
    }
  \end{remark}

\subsection{Computationally stable implementation}\label{ssec:implementation}

  As was discussed at the beginning of Section~\ref{sec:res}, interpolation-based methods for subdiffusion equations require
  strong local refinements of temporal grids to attain optimal convergence rates.
  By contrast, there is a perception in the community that strongly refined temporal grids, in the context of
  subdiffusion equations, lead to numerically unstable implementations.

  In~\cite[Section~4]{FrK23} we have elaborated on why certain direct implementations may, indeed, lead to numerical instabilities,
  and, furthermore, showed that appropriate treatments of problematic terms  yield
  computationally stable implementations for a number of discretisations
  (an  L1-2 method and a certain class of collocation methods of arbitrary order were addressed, as well as the L1 method).
  Since a stable implementation is an important aspect of any numerical method, we shall now briefly describe 
  how the ideas of \cite{FrK23} apply to the class of collocation schemes considered in this paper.

  Any computationally stable implementation in the context of subdiffusion equations requires appropriate remedies
  for two potential difficulties: i. weakly singular integrals, for which standard quadrature routines become very
  inaccurate; ii. computing differences of nearly equal numbers.
  We shall now discuss where these difficulties occur and how they may be treated.

  Essentially, we need to accurately compute  $J_t^\alpha$ applied to any function from the space $\W_\tau$ of piecewise-polynomial functions.
  For example, to implement \eqref{eq:coll_def_w_col}, one needs to compute $J_t^\alpha w_\tau$ at all collocation points $\{t_k^\ell\}$
  (which is also needed to compute $ u_\tau=u_0+J_t^\alpha w_\tau$ at the collocation points).
  For the adaptive time stepping algorithm, on the other hand, one needs to compute the residual $\RS$ at a number of sample points different from $\{t_k^\ell\}$
  (in view of $\RS(\cdot, t_k^\ell)=0$); hence, one also needs to compute $J_t^\alpha w_\tau$
  (and, thus, by \eqref{eq:residual}, the residual $\RS$)
  at a number of sample points $\not\in\{t_k^\ell\}$ on each $(t_{k-1},t_k)$.

  Consider any polynomial basis, denoted by $\{\hat\phi_\ell(\sigma)\}_{\ell=0}^m$,
  on the reference interval $(0,1)$.
  Now, let the local basis $\{\phi_\ell^k(t)\}_{\ell=0}^m$ with support on $(t_{k-1},t_k)$
  be obtained by the standard linear transformation, i.e. $\phi_\ell^k(t):=\hat\phi_\ell(t_{k-1}+\sigma\tau_k)$ $\forall\,\sigma\in(0,1)$.

  First, we describe the stable computation of $J_t^\alpha\phi_\ell^k$ at
  any point $t=t_{k-1}+\theta\tau_{k}\in(t_{k-1},t_k]$ for any $\theta\in(0,1]$, including $\theta\in\{\theta_j\}$.
  A calculation shows that
  \begin{align*}
    t=t_{k-1}+\theta\tau_{k}
    \quad\Rightarrow\quad
    J_t^\alpha\phi_\ell^k(t)
    &= \frac{1}{\Gamma(\alpha)}\int_{t_{k-1}}^{t}\!(t-s)^{\alpha-1}\,\phi_\ell^k(s)\,\ds\\
    &= \frac{\tau_k^\alpha}{\Gamma(\alpha)}\int_0^{\theta}\!(\theta-\sigma)^{\alpha-1}\,\hat\phi_\ell(\sigma)\,\dsigma.
  \end{align*}
  While the resulting integral is only weakly singular, the reader is cautioned against
  applying standard (typically adaptive) quadrature routines.
  (For example, a simple computational test shows that the Matlab function \texttt{integral} becomes appallingly inaccurate when applied
  to a simple singular integral $\int_0^1 s^{-\alpha}ds$ as $\alpha\rightarrow 1^-$.)
  This difficulty is immediately remedied by applying integration by parts to the above integral, which yields
  \begin{align*}
    J_t^\alpha\phi_\ell^k(t)
      &=\frac{\tau_k^\alpha}{\alpha\,\Gamma(\alpha)}
        \left( \theta^\alpha\,\hat\phi_\ell(0)+\int_0^{\theta}\!(\theta-\sigma)^\alpha\,\hat\phi_\ell'(\sigma)\,\dsigma \right)\\
      &=\tau_k^\alpha\,\frac{\theta^\alpha}{\Gamma(\alpha+1)}
        \left(\hat\phi_\ell(0)+\theta\int_0^1\!(1-\sigma)^\alpha\,\hat\phi_\ell'(\sigma\theta)\,\dsigma \right),
  \end{align*}
  where the remaining integral is no longer singular, so can be computed by an adaptive quadrature rule,
  which is computationally stable and accurate for such integrals.
  Note that further application of integration by parts to evaluate it exactly is not recommended, as this can lead
  to unstable evaluations, as discussed in~\cite{FrK23}.
  When choosing  the basis $\{\hat\phi_{\ell}\}$, one may additionally impose that $\hat\phi_\ell(0)=0$  $\forall\,\ell>0$,
  in which case the term $\hat\phi_\ell(0)$ will vanish $\forall\,\ell>0$.
  Note also, that the dependence on $k$ is only in the factor $\tau_k^\alpha$, so these integrals
  can be assembled in advance for all $\theta\in\{\theta_j\}$, as well as for the predefined set of values of $\theta$ corresponding to the residual sample points on each $(t_{k-1},t_k]$.

  It remains to consider the computation of $J_t^\alpha\phi_\ell^k$ at
  any point $t>t_{k}$, or, equivalently,
  $t=t_{k-1}+\Theta\tau_{k}$ with arbitrarily large $\Theta>1$,
  for which we have
  \begin{align*}
    J_t^\alpha\phi_\ell^k(t)
      &=\frac{1}{\Gamma(\alpha)}\int_{t_{k-1}}^{t_k}\!(t-s)^{\alpha-1}\,\phi_\ell^k(s)\,\ds\\
      &= \frac{\tau_k^\alpha}{\Gamma(\alpha)}\int_0^1\!(\Theta-\sigma)^{\alpha-1}\,\hat\phi_\ell(\sigma)\,\dsigma.
  \end{align*}
  Here note that even if $\hat\phi_\ell(\sigma)=\hat\phi_\ell(1)$, i.e. $\hat\phi_\ell$ is constant,
  the above integral $\int_0^1 \cdots \,\dsigma$ becomes $\new{-}\hat\phi_\ell(1)\, \alpha^{-1}(\Theta-\sigma)^{\alpha}\big|^1_{\sigma=0}$,
  which, for large values of $\Theta$, requires computing the difference of nearly equal numbers.
  To avoid round-off errors, the latter can be implemented using the Matlab commands
  \texttt{expm1} and \texttt{log1p}, respectively corresponding to the functions $\exp(\cdot )-1$
  and $\ln(1+\cdot)$ (or similar commands in  other scientific programming languages)---as elaborated in \cite[see (19)]{FrK23}.
  More generally, a polynomial $\hat\phi_\ell(\sigma)=\hat\phi_\ell(1)-[\hat\phi_\ell(1)-\hat\phi_\ell(\sigma)]$ leads to
  \begin{align*}
    J_t^\alpha\phi_\ell^k(t)
      &= \frac{\tau_k^\alpha}{\Gamma(\alpha)}
          \left(
            \new{-}\hat\phi_\ell(1)\,\frac{(\Theta-\sigma)^{\alpha}}{\alpha}\bigg|^1_{\sigma=0}
            \!\!-\!\int_0^1\!(\Theta-\sigma)^{\alpha-1}\,\bigl[\hat\phi_\ell(1)-\hat\phi_\ell(\sigma)\bigr]\,\dsigma
          \right)\!,
  \end{align*}
  where the integral is non-singular (whether $\Theta$ is close to $1$ or arbitrarily large), so \new{it} can be computed using any quadrature routine, while
  if $\hat\phi_\ell(1)\neq0$,
  the first term requires a stable implementation using \texttt{expm1} and \texttt{log1p}, as we already discussed.

  Thus, the considered collocation schemes can be implemented avoiding any noticeable round-off errors for any $m\ge 0$ and any set of collocation points,
  and, likewise, the computation of residuals remains stable and reliable even for extremely strong local refinements of temporal grids.

\section{Numerical results}\label{sec:numerics}

  As already discussed,
  the a-posteriori error estimates \eqref{barrier_bounds_alg_new}
  provide a rigorous justification for time stepping algorithms with a local time stepping criterion (while the original subdiffusion equations are non-local).
  For example, if we choose a constant value $TOL$ as a desired pointwise-in-time error profile---which corresponds to
  \eqref{barrier_bounds_alg_new_a}---each time step $\tau_k$ is to be chosen adaptively subject to
  $\|\RS(\cdot,t)\|_{L_p(\Omega)}$ not exceeding the corresponding residual barrier from
  \eqref{barrier_bounds_alg_new_a} $\forall\,t\in(\new{t_{k-1}},t_k]$. We refer the reader to \cite[Algorithm 1]{FrK23}
  for details and a discussion of more efficient strategies for choosing the local time steps.
  Note also that the considered collocation schemes, in combination with the time stepping algorithm,
  can be implemented in a computationally stable way---along the lines of Section~\ref{ssec:implementation}---for any fractional order $\alpha$
  in the range between $0.1$ and $0.999$ (at least), and for values of $TOL$
  as small as $10^{-8}$ in Matlab.

  As a test problem, we consider equation \eqref{eq:problem}
  in the domain $\Omega\times[0,T]=(0,1)\times[0,1]$, with $\LL:=-\pt_x^2$, i.e.
  \begin{equation}\label{test}
    \pt^\alpha_t u-\pt_x^2 u=f(x,t)\qquad\mbox{for}\;\;(x,t)\in(0,1)\times(0,1],
  \end{equation}
  subject to the homogeneous boundary conditions in $x$ and a given initial condition.
  Note that one immediately constructs a function $g=1+\frac12\lambda\, x(1- x)$,  associated with this problem,
  that satisfies the conditions of Theorem~\ref{the_g}
  with an arbitrary positive $\lambda:=\LL g=\lambda$ and the corresponding $\omega:=\frac18\lambda$;
  so each such pair $(\lambda,\omega)$ can be used in \eqref{barrier_bounds_alg_new} if the errors are estimated in the $L_\infty(\Omega)$ norm.

  For test problem \eqref{test}, we employ the time stepping algorithm  \cite[Algorithm 1]{FrK23} in all our experiments,
  with the residual profile $TOL\cdot\RR_0(t)/(1+\omega)$, where $\RR_0(t)=t^{-\alpha}/\Gamma(1-\alpha)+\lambda$,
  imposed on the pointwise-in-time residual $\|\RS(\cdot,t)\|_{L_\infty(\Omega)}$.
  In view of \eqref{barrier_bounds_alg_new_a}, this guarantees that for all $t>0$ the error
  $\|e(\cdot,t)\|_{L_\infty(\Omega)}\le TOL$. Thus, the algorithm guarantees that the $L_\infty(0,T;\, L_\infty(\Omega))$ errors will not exceed $TOL$
  in all our computations.

  We consider \new{three} particular cases of \eqref{test}, \new{each} with a known solution.

  \example \label{Ex1}
  Let 
  \[
    u(x,t)=(t^\alpha-t^2+1)\,x\,(1-x)
  \]
  be the exact solution of \eqref{test} with the initial condition $u_0(x)=x\,(1-x)$
  \new{and the right-hand side $f(x,t)=\left(\Gamma(\alpha+1)-\frac{2}{\Gamma(3-\alpha)}t^{2-\alpha}\right)x(1-x) + 2(t^{\alpha}-t^2+1)$.
  }%
  The exact solution exhibits a typical weak singularity of type $t^\alpha$ near $t=0$.
  Note that $u$ is a quadratic polynomial as a function of $x$ for each $t\ge 0$. Therefore, using
  piecewise quadratic elements in space, on a coarse spatial grid of just ten cells
  resolves it exactly, and any error obtained is purely due to the time
  discretisation.

  We begin our numerical investigation of collocation schemes~\eqref{eq:coll_def}, both discontinuous and continuous,
  by comparing various sets of collocation points for a
  fixed polynomial degree of $m=4$.
  Figure~\ref{fig:R0LinfLinf_04_cmp}
  \begin{figure}[htb]
    \begin{center}
      \includegraphics{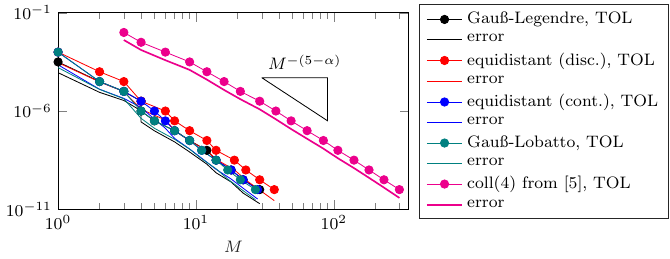}
    \end{center}\vspace{-0.6cm}
    \caption{Adaptive time stepping algorithm for Test example~\ref{Ex1}:
             $TOL$ and the corresponding $L_\infty(0,T;\, L_\infty(\Omega))$ errors
             vs. number of time intervals $M$ for $m=4$ and various choices of collocation points, $\alpha=0.4$,
             residual barrier $\RR_0$ with $\lambda=\pi^2$ and $\omega=\lambda/8$.\label{fig:R0LinfLinf_04_cmp}}
  \end{figure}
  shows the results of the time stepping adaptation. 
  While the number of time intervals $M$ is chosen adaptively for each prescribed $TOL$, we plot both
  prescribed tolerances $TOL$ and the corresponding $L_\infty(0,T;L_\infty(\Omega))$ errors as functions of $M$.
  We ran experiments with  \eqref{eq:coll_def} for four sets of collocation points.
  Two discontinuous schemes were considered, with
  i. \new{Gauss}-Legendre points, ii. equidistant points ($\theta_j=\frac{j+1}{m+2}$), as well as
  two continuous schemes, with  iii. equidistant points ($\theta_j=\frac{j}{m+1}$), iv. \new{Gauss}-Lobatto points.
  Additionally, in the same Figure~\ref{fig:R0LinfLinf_04_cmp} we present analogous results for the collocation method of order $m=4$ considered in \cite{FrK23}
  (in which the computed solution is a continuous piecewise polynomial of order $m$) with equidistant collocation points ($\theta_j=\frac{j}{m+1}$).

  In all these experiments we observe that the errors are, as expected, below the corresponding $TOL$.
  Furthermore, the errors remain quite close to $TOL$ in all our experiments, which indicates good effectivity
  indices of the considered a-posteriori error estimates, and also illustrates the efficiency of the time stepping algorithm.

  Within the same class of collocation schemes  \eqref{eq:coll_def}, the considered sets collocation points
  showed very slight error variations in Figure~\ref{fig:R0LinfLinf_04_cmp},
  with \new{Gauss}-Legendre points performing slightly better.
  By contrast, it is clear from this figure that for Test example~\ref{Ex1}, collocation methods of type
  \eqref{eq:coll_def}, with piecewise polynomial fractional derivatives $\pt_t^\alpha u_\tau$ of the computed
  solution $u_\tau$, noticeably outperform those from \cite{FrK23}, with piecewise polynomial $u_\tau$.
  While all considered five methods exhibit close to optimal convergence rate $\ord{M^{-(5-\alpha)}}$,
  we observe that the collocation methods \eqref{eq:coll_def} require substantially smaller values of $M$ to achieve
  the same level of accuracy.
  This phenomenon is easily explained by that the error in \eqref{eq:coll_def} is induced by  a polynomial approximation of $\pt_t^\alpha u$;
  hence, the accuracy depends on the smoothness of $\pt_t^\alpha u$ (rather than on the smoothness of $u$, which is the case for the collocation methods in \cite{FrK23}).
  Although the solution $u$ of Test example~\ref{Ex1} has
  a weak initial singularity of type $t^\alpha$, one immediately notes a substantially weaker singularity  in $\partial_t^\alpha u$ of type $t^{2-\alpha}$.
  This is illustrated in Figure~\ref{fig:Ex1_method_cmp},
  \begin{figure}[htb]
    \begin{center}
      \includegraphics{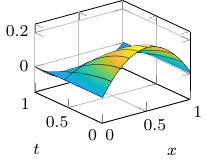}
      \includegraphics{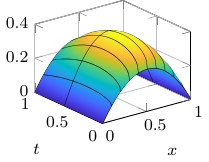}
      \includegraphics{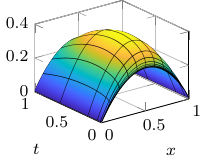}
    \end{center}\vspace{-0.6cm}
    \caption{Test example~\ref{Ex1}, $\alpha=0.4$, adaptive computed solutions, $TOL=10^{-4}$,
             residual barrier $\RR_0$ with $\lambda=\pi^2$ and $\omega=\lambda/8$:
             $\pt_t^\alpha u_\tau$ (left) and $u_\tau$ (center),
             as well as the collocation solution from \cite{FrK23} (right).\label{fig:Ex1_method_cmp}}
  \end{figure}
  which clearly shows  that  $\partial_t^\alpha u$ in Test example~\ref{Ex1} has no visible initial singularity.
  Consequently, the resulting temporal meshes  can be much coarser near $t=0$ compared to the
  collocation scheme from \cite{FrK23}.

  Importantly, the adaptive algorithm automatically identifies that a relatively \new{coarse} temporal grid suffices
  if \eqref{eq:coll_def} is used for Test example~\ref{Ex1}, i.e. the local time step criterion automatically
  adjusts to the error of a given numerical method applied to a given problem.

  In the next Figure~\ref{fig:R0LinfLinf_04_GLeg}, for the same Test example~\ref{Ex1},
  we vary the polynomial degree $m$ between $0$ and $8$, while using \new{Gauss}-Legendre collocation points.
  \begin{figure}[htb]
    \begin{center}
      \includegraphics{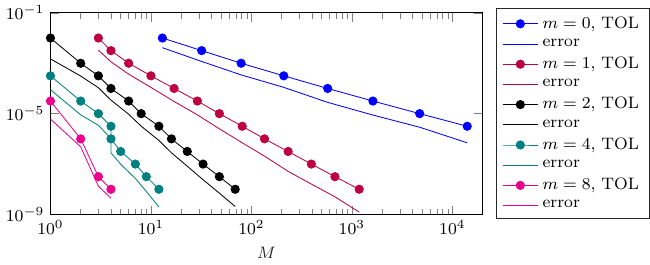}
    \end{center}\vspace{-0.6cm}
    \caption{Adaptive time stepping algorithm for Test example~\ref{Ex1}: $TOL$ and the corresponding
             $L_\infty(0,T;\, L_\infty(\Omega))$ errors vs. number of time steps $M$
             for various polynomial degrees $m$ and  \new{Gauss}-Legendre collocation points, $\alpha=0.4$,
             residual barrier $\RR_0$ with $\lambda=\pi^2$ and $\omega=\lambda/8$.\label{fig:R0LinfLinf_04_GLeg}}
  \end{figure}
  As expected, as $m$ increases, the method yields smaller errors and higher convergence rates.
  Here we even achieve an error below the $TOL=10^{-8}$ bound for $m=8$ with $M=4$, while with $M=10^4$, the piecewise-constant
  collocation scheme, with $m=0$, yields
  errors just below the $TOL=10^{-5}$ bound. We also observe improvements in computation times for
  the entire adaptive process with higher-order collocation schemes \eqref{eq:coll_def}.
  \new{
  This can be seen clearly in Table~\ref{tab:speed},
  \begin{table}[htb]
  \new{
    \caption{Number of cells and adaptation runtime for Test example~\ref{Ex1} for two
             different given $TOL$ and various polynomial degrees $m$, using Gauss-Legendre points, $\alpha=0.4$,
             residual barrier $\RR_0$ with $\lambda=\pi^2$ and $\omega=\lambda/8$.\label{tab:speed}}
    \begin{center}
      \begin{tabular}{crr|rr}
        &\multicolumn{2}{c}{$TOL=10^{-5}$}
        &\multicolumn{2}{c}{$TOL=10^{-8}$}\\
        $m$ & \#cells & time [sec] & \#cells & time[sec]\\
        \hline
        0 & 4729 & 504.5 & \\
        1 &   48 &   2.0 & 1182 & 82.6\\
        2 &    8 &   0.7 &   69 &  3.1\\
        4 &    3 &   0.5 &   12 &  1.7\\
        8 &    2 &   0.4 &    4 &  1.2
      \end{tabular}
    \end{center}
    }
  \end{table}
  where for various polynomial degrees and two values of $TOL$ the number of cells and computational runtime
  for the adaptive method are given.
  For the other examples we observe a similar behaviour.
  }

  \example \label{Ex2}
  Let 
  \[
    u(x,t)=(t^\alpha-t^{2\alpha}+1)\,x\,(1-x)
  \]
  be the exact solution of \eqref{test}
  \new{with the right-hand side
  \[
    f(x,t)=\left(\Gamma(\alpha+1)-\frac{\Gamma(2\alpha+1)}{\Gamma(\alpha+1)}t^{\alpha}\right)x(1-x) + 2(t^{\alpha}-t^{2\alpha}+1).
  \]
  }%
  In contrast to Test example~\ref{Ex1}, both
  $u$ and $\pt_t^\alpha u$ exhibit a weak initial singularity of the same type $t^\alpha$,
  so we expect stronger-refined adaptive temporal grids, and also more similarity in the errors
  generated by \eqref{eq:coll_def} as opposed to collocation schemes from \cite{FrK23}.
  Indeed, Figure~\ref{fig:Ex2_method_cmp}
  \begin{figure}[htb]
    \begin{center}
      \includegraphics{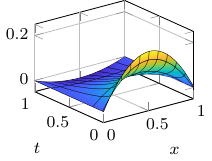}
      \includegraphics{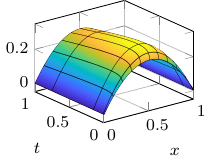}
      \includegraphics{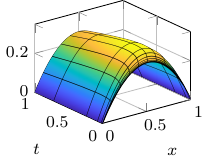}
    \end{center}\vspace{-0.6cm}
    \caption{Test example~\ref{Ex2}, $\alpha=0.4$, adaptive computed solutions, $TOL=10^{-4}$, residual barrier
             $\RR_0$ with $\lambda=\pi^2$ and $\omega=\lambda/8$:
             $\pt_t^\alpha u_\tau$ (left) and $u_\tau$ (center),
             as well as the collocation solution from \cite{FrK23} (right).\label{fig:Ex2_method_cmp}}
  \end{figure}
  shows a weak initial singularity in both $\pt_t^\alpha u_\tau$ and $u_\tau$ and a stronger initial
  refinement of the adaptive temporal grid. At the same time,
  the grid is not as refined as the corresponding one for the collocation scheme of type \cite{FrK23} generated with the same algorithm.

  Next, in Figure~\ref{fig:ex2_R0LinfLinf_04_GLeg}, we again vary the polynomial degree $m$ between $0$ and $8$,
  \begin{figure}[htb]
    \begin{center}
      \includegraphics{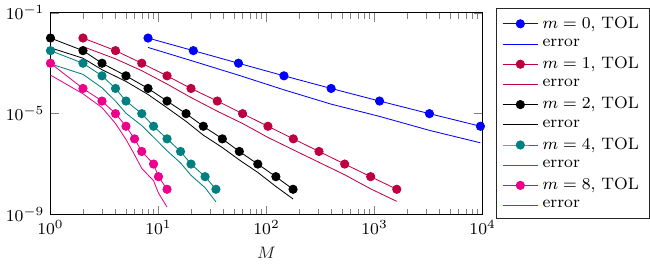}
    \end{center}\vspace{-0.6cm}
    \caption{Adaptive time stepping algorithm for Test example~\ref{Ex2}: $TOL$ and the corresponding
             $L_\infty(0,T;\, L_\infty(\Omega))$ errors vs. number of time steps $M$
             for various polynomial degrees $m$ and  \new{Gauss}-Legendre collocation points, $\alpha=0.4$,
             residual barrier $\RR_0$ with $\lambda=\pi^2$ and $\omega=\lambda/8$.\label{fig:ex2_R0LinfLinf_04_GLeg}}
  \end{figure}
  while using \new{Gauss}-Legendre collocation points.
  As expected, we again observe higher convergence rates and smaller errors with an increasing polynomial degree $m$.

  Finally, in Figure~\ref{fig:ex2_R0LinfLinf_04_cmp}
  \begin{figure}[htb]
    \begin{center}
      \includegraphics{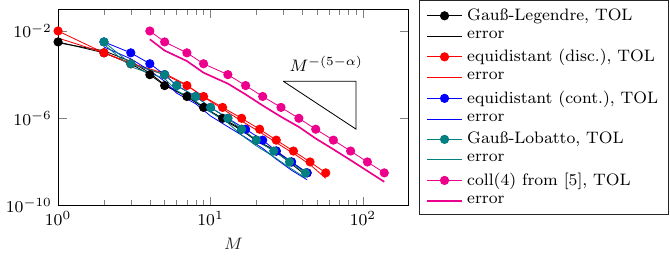}
    \end{center}\vspace{-0.6cm}
    \caption{Adaptive time stepping algorithm for Test example~\new{\ref{Ex2}}: $TOL$ and the corresponding
             $L_\infty(0,T;\, L_\infty(\Omega))$ errors vs. number of time steps $M$
             for $m=4$, $\alpha=0.4$,
             residual barrier $\RR_0$ with $\lambda=\pi^2$ and $\omega=\lambda/8$,
             \eqref{eq:coll_def} with \new{various choices of collocation} points and the one from \cite{FrK23}.\label{fig:ex2_R0LinfLinf_04_cmp}}
  \end{figure}
  we compare the performance of collocation schemes in \eqref{eq:coll_def} and \cite{FrK23}, both combined with  the same adaptive algorithm,
  and again observe that the class of collocation methods \eqref{eq:coll_def}, considered in this paper, still outperforms the one in \cite{FrK23}
  (albeit not as significantly as in the context of Test example~\ref{Ex1}).

  \new{
  \example \label{Ex3}
  Let
  \[
    u(x,t)=(t^{2\alpha}-t^2+1)\,x\,(1-x)
  \]
  be the exact solution of \eqref{test}
  with the right-hand side
  \[
    f(x,t)=\left(\frac{\Gamma(2\alpha+1)}{\Gamma(\alpha+1)}t^{\alpha}-\frac{2}{\Gamma(3-\alpha)}t^{2-\alpha}\right)x(1-x) + 2(t^{\alpha}-t^{2\alpha}+1).
  \]
  In contrast to the previous examples, the solution $u$ exhibits a weaker singularity than $\partial_t^\alpha u$.
  Again we want to compare the new method with the one in \cite{FrK23}.
  Figure~\ref{fig:Ex3_method_cmp}
  \begin{figure}[htb]
    \begin{center}
      \includegraphics{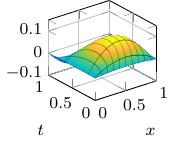}
      \includegraphics{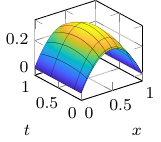}
      \includegraphics{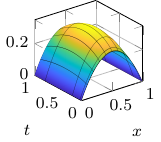}
    \end{center}\vspace{-0.6cm}
    \caption{Test example~\ref{Ex3}, $\alpha=0.4$, adaptive computed solutions, $TOL=10^{-4}$, residual barrier
             $\RR_0$ with $\lambda=\pi^2$ and $\omega=\lambda/8$:
             $\pt_t^\alpha u_\tau$ (left) and $u_\tau$ (center),
             as well as the collocation solution from \cite{FrK23} (right).\label{fig:Ex3_method_cmp}}
  \end{figure}
  shows weak initial singularities in both $\pt_t^\alpha u_\tau$ and $u_\tau$ and also, that the mesh
  adaptation algorithm gives very similar meshes for the collocation method in this paper and the one from
  \cite{FrK23}. Indeed, the direct comparison in Figure~\ref{fig:ex3_R0LinfLinf_04_cmp}
  \begin{figure}[htb]
    \begin{center}
      \includegraphics{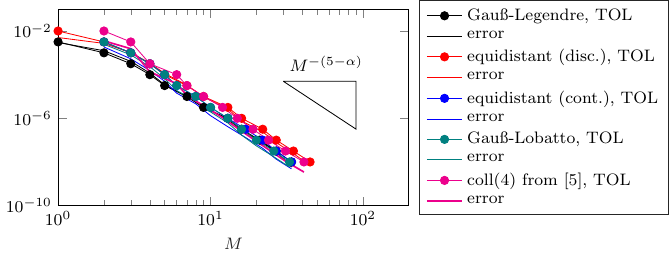}
    \end{center}\vspace{-0.6cm}
    \caption{Adaptive time stepping algorithm for Test example~\ref{Ex3}: $TOL$ and the corresponding
             $L_\infty(0,T;\, L_\infty(\Omega))$ errors vs. number of time steps $M$
             for $m=4$, $\alpha=0.4$,
             residual barrier $\RR_0$ with $\lambda=\pi^2$ and $\omega=\lambda/8$,
             \eqref{eq:coll_def} with {various choices of collocation} points and the one from \cite{FrK23}.\label{fig:ex3_R0LinfLinf_04_cmp}}
  \end{figure}
  for $m=4$ shows a very similar performance of both methods, although again the new method is slighly better.
  This example further strengthens our interpretation, that a weaker singularity leads to a better performance
  of the method.

  Finally, Figure~\ref{fig:ex3_R0LinfLinf_04_GLeg}
  \begin{figure}[htb]
    \begin{center}
      \includegraphics{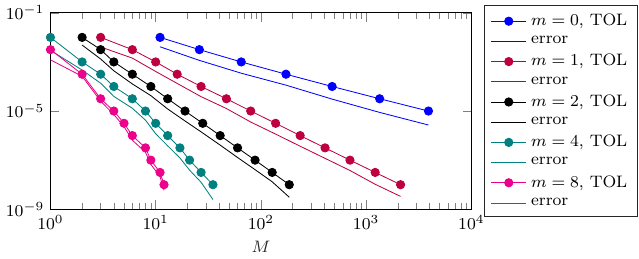}
    \end{center}\vspace{-0.6cm}
    \caption{Adaptive time stepping algorithm for Test example~\ref{Ex3}: $TOL$ and the corresponding
             $L_\infty(0,T;\, L_\infty(\Omega))$ errors vs. number of time steps $M$
             for various polynomial degrees $m$ and {Gauss}-Legendre collocation points, $\alpha=0.4$,
             residual barrier $\RR_0$ with $\lambda=\pi^2$ and $\omega=\lambda/8$.\label{fig:ex3_R0LinfLinf_04_GLeg}}
  \end{figure}
  shows again improved convergence for higher polynomial degrees.
  }
\section*{Conclusions}
  The paper addresses the numerical solution of time-fractional parabolic equations with a Caputo time derivative of order $\alpha\in(0,1)$.
  In the context of such equations, we have considered a class of collocation schemes, which assume that
  the Caputo derivative  of the computed solution (rather than the computed solution itself)
  is a polynomial of degree $m\ge 0$ in time on each time interval.
  For such discretisations of any order $m\ge 0$, with any choice of collocation points,
  we have given sufficient conditions for existence and uniqueness of collocation solutions,
  with both continuous and discontinuous versions having been addressed.
  While any a-priori error analysis for the considered methods is very problematic, we have presented
  residual-type a-posteriori estimates for the errors induced by discretisation in time,
  which naturally lead to adaptive time stepping algorithms, such as developed in \cite{FrK23},
  with local time step criteria (while the original subdiffusion equation is non-local).
  In the context of such adaptive algorithms, we have described computationally stable implementations of the considered collocation schemes and
  performed extensive numerical experiments, which support the applicability and reliability of the constructed time stepping
  algorithm. Furthermore, for both test examples, our numerical results suggest that the considered class of collocation schemes outperforms the one from~\cite{FrK23}.
\new{Finally, the a-posteriori error estimation and
  adaptive time stepping algorithms, such as developed here and in  \cite{Kopteva22,Kopt_Stynes_apost22,FrK23},
 are particularly important for nonlinear time-fractional parabolic equations (which frequently require long-time computations).
While preliminary estimators for this case have been presented in our recent paper \cite[Section 2.2]{FrK23},
 the development of
reliable and efficient time stepping algorithms for semilinear subdiffusion equations is
 part of our ongoing research and will be addressed elsewhere.}

  \bibliographystyle{plainurl}
  \bibliography{lit}
\end{document}